\newcommand{\ig}{\Big}
\newcommand{\igg}{\Bigg}
\newcommand{\sigmama}{ {{g}} }
\newcommand{\p}{\partial}
\newcommand{\R}{{\cal R}}
\newcommand{\HH}{ H_{1,\, \eta}(G) }
\newcommand{\G}{{\mathbb C}[G]}
\newcommand{\str}{sp }
\newcommand{\ce}{{\cal Z}^L }
\newcommand{\se}{{\cal Z}^S }
\newcommand{\LLL}{ L_{1,\, \eta}(G) }
\newcommand{\SSS}{ S_{1,\, \eta}(G) }
\newcommand{\be}{\begin{equation}}
\newcommand{\ee}{\end{equation}}
\newcommand{\bee}{\begin{eqnarray}}
\newcommand{\eee}{\end{eqnarray}}
\newcommand\nn{\nonumber \\}
\newcommand\n{ }
\newcommand\defeq{:=}
\newcounter{theorem}
\makeatletter \@addtoreset{theorem}{section}
\newcounter{lemma}
\makeatletter \@addtoreset{lemma}{section}
\newcounter{proposition}
\makeatletter \@addtoreset{proposition}{section}
\newcounter{conjecture}
\makeatletter \@addtoreset{conjecture}{section}
\newcounter{definition}
\makeatletter \@addtoreset{definition}{section}
\newenvironment{proof}[1][Proof]{\noindent\textsf{#1.\ }}
{\hfill {\small $\square$}}
\makeatletter \@addtoreset{equation}{section}
\begin{document}


\sloppy \title
 {
The number of independent Traces and Supertraces on
Symplectic Reflection Algebras
}
\author
 {
 S.E.Konstein\thanks{E-mail: konstein@lpi.ru}\ \ and
 I.V.Tyutin\thanks{E-mail: tyutin@lpi.ru}
 \thanks{
               This work was supported
               by the RFBR (grant No.~11-02-00685)
 } \\
               {\sf \small I.E.Tamm Department of
               Theoretical Physics,} \\ {\sf \small P. N. Lebedev Physical
               Institute,} \\ {\sf \small 119991, Leninsky Prospect 53,
               Moscow, Russia.} }
\date{
}

\maketitle
\thispagestyle{empty}

\begin{abstract}

It is shown that $A:= H_{1, \,\eta}(G)$, the sympectic reflection algebra,
 has $T_G$ independent traces, where $T_G$ is the
number of conjugacy classes of elements without eigenvalue $1$
belonging to the finite group $ {G} \subset Sp(2N) \subset End({\mathbb  C}^{2N})$
generated by the system of symplectic reflections.

Simultaneously, we show that the algebra
$ A$,  considered as a superalgebra with a natural
parity, has $S_G$ independent  supertraces, where $S_G$ is
the number of conjugacy classes of elements without eigenvalue $-1$
belonging to $ {G} $.

We consider also $A$ as a Lie algebra $A^L$ and as a Lie superalgebra $A^S$.

It is shown  that if $A$ is a simple associative algebra,
then the supercommutant $[A^{S},A^{S}]$ is a simple Lie superalgebra having
at least $S_G$ independent supersymmetric invariant non-degenerate  bilinear forms,
and
the quotient $[A^L,A^L]/([A^L,A^L]\cap\mathbb C)$ is a simple Lie algebra having
at least $T_G$ independent symmetric invariant non-degenerate  bilinear forms.
\end{abstract}




\section{Introduction}

In \cite{KT}, it was shown
that $H_{W({\mathfrak R})}(\eta)$, the algebra  of observables of
the rational Calogero model based on the root system ${\mathfrak R} \subset
{\mathbb  R}^N$, has $T_{\mathfrak R}$ independent traces, where $T_{\mathfrak R}$ is the
number of conjugacy classes of elements without eigenvalue $1$
belonging to the Coxeter group $W({\mathfrak R})\subset End({\mathbb  R}^N)$
generated by the root system ${\mathfrak R}$,
and that
the algebra
$H_{W({\mathfrak R})}(\eta)$,  considered as a superalgebra with a natural
parity, has $S_{\mathfrak R}$ independent  supertraces, where $S_{\mathfrak R}$ is
the number of conjugacy classes of elements without eigenvalue $-1$
belonging to $W({\mathfrak R})$.

Unlike the case of finite-dimensional associative algebras,
the presence of several (super)traces on the  infinite-dimensional
superalgebra $H_{W({\mathfrak R})}(\eta)$
in the case of irreducible ${\mathfrak R}$
does not necessarily imply
violation of simplicity
except certain particular values of parameter(s) $\eta$.

It is easy to show that $H_{W({\mathfrak R})}(\eta)=H_{1, \,\eta}(W({\mathfrak R}))$, where
$ H_{t, \,\eta}(G)$ is a symplectic reflection algebra introduced in \cite{sra}
for any finite group $G\subset Sp(2N)$
generated by symplectic reflections.
Here we extend the results of \cite{KT} from $H_{1, \,\eta}(W({\mathfrak R}))$
to $ H_{1, \,\eta}(G)$.

Besides, we consider Lie (super)algebras generated by $\HH$ and
invariant (super)symmetric bilinear forms on these Lie (super)algebras
generated by the traces and supertraces.

\section{Preliminaries}

\subsection{Traces}

Let ${\cal A}$ be an associative superalgebra with parity $\pi$.
All expressions of linear algebra are given for homogenous elements only
and are supposed to be extended to inhomogeneous elements via linearity.

A linear function $str$ on ${\cal A}$ is called a {\it supertrace} if
$$str(fg)=(-1)^{\pi(f)\pi(g)}str(gf) \ \mbox{ for all } f,g\in {\cal A}.$$

A linear function $tr$ on ${\cal A}$ is called a {\it trace} if
$$tr(fg)=tr(gf) \ \mbox{ for all } f,g\in {\cal A}.$$

Let $\varkappa=\pm 1$.
We can unify the definitions of trace and supertrace by introducing a $\varkappa$-trace.
We say that a linear function\footnote{From the German word {\it Spur}.}
$\str$
on ${\cal A}$ is a {\it $\varkappa$-trace} if
\begin{equation}\label{scom}
\str(fg)=\varkappa^{\pi(f)\pi(g)}\str(gf) \ \mbox{ for all $f,g\in {\cal A}$}.
\end{equation}

 A linear function $L$ is {\it even} (resp. {\it odd}) if $L(f)=0$
for any odd (resp. even) $f\in{\cal A}$.

Clearly, any linear function $L$
can be decomposed in the sum $L=L_++L_-$
of even linear function $L_+$ and odd linear function $L_-$.

Observe that each odd trace is simultaneously an odd supertrace and vice versa.

Let ${\cal A}_1$ and ${\cal A}_2$ be associative superalgebras with parities $\pi_1$ and $\pi_2\,$,
respectively.

Define their tensor product%
\footnote{In this paper we do not need {\it the supertensor product}
introduced by setting
\[
(a_1 \otimes a_2)(b_1 \otimes b_2)=(-1)^{\pi_1(b_1)\pi_2(a_2)}(a_1
b_1) \otimes (a_2 b_2)\text{~~for any $a_1, b_1\in {\cal A}_1,\ a_2,
b_2\in {\cal A}_2$}.\] }
${\cal A}={\cal A}_1\otimes {\cal A}_2$
as a superalgebra with the product
\be\nonumber
(a_1 \otimes a_2)(b_1 \otimes b_2)=(a_1 b_1) \otimes (a_2 b_2)
\text{~~for any
$a_1, b_1\in {\cal A}_1,\ a_2, b_2\in {\cal A}_2$}
\ee
and the
parity
$\pi$ defined by the formula
$\pi(a_1\otimes a_2)=\pi_1(a_1)+\pi_2(a_2)$.

Let $T_i$ be a trace on  ${\cal A}_i$.
Clearly, the function $T$ such that $T(a\otimes b)=T_1(a)T_2(b)$ is a trace on ${\cal A}$.

Let $S_i$ be an {\it even} supertrace on  ${\cal A}_i$.
Clearly, the function $S$ such that $S(a\otimes b)=S_1(a)S_2(b)$ is an even supertrace on ${\cal A}$.

In what follows, we use three types of brackets:
\begin{eqnarray}
[f,g]&=&fg-gf ,   \nonumber \\
\{f,g\}&=&fg+gf ,\nonumber \\
\lbrack f,g\rbrack _{\varkappa} &=& fg -\varkappa^{\pi(f)\pi(g)}gf.\nonumber
\end{eqnarray}

Every $\varkappa$-trace $\str (\cdot)$ on superalgebra ${\cal A}$ generates
the following bilinear form on ${\cal A}$:
\bee\label{bf}
B_{\str }(f,g):=\str (f\cdot g) \mbox{ for any } f,g\in {\cal A}.
\eee

It is obvious that if such a bilinear form $B_{\str }$ is degenerate,
then the
null-vectors of $B_{\str }$ (i.e., $v \in {\cal A}$ such that $B(v, x)=0$ for any
$x\in {\cal A}$)
constitute the two-sided
ideal ${\cal I}\subset {\cal A}$.
If the $\varkappa$-trace generating degenerate bilinear form is homogeneous (even or odd),
then the corresponding ideal is a superalgebra.

If $\varkappa=-1$, the ideals of this sort are present, for example,
in the superalgebras $H_{1,\eta}(A_{1})$
(corresponding to the two-particle Calogero model) at $\eta =k+ \frac{1}{2} $,
see \cite{V}, and in the superalgebras $H_{1,\eta}(A_{2})$
(corresponding to three-particle Calogero model)
at $\eta =k+ \frac{1}{2} $
and $\eta=k\pm\frac1 3$, see \cite{K2},
for every integer $k$. For all
other values of $\eta$ all supertraces on these superalgebras
generate non-degenerate bilinear forms (\ref{bf}).

The general case of $H_{1,\eta}(A_{n-1})$ for arbitrary $n$ is considered
in \cite{IL}. Theorem 5.8.1 of \cite{IL} states that
the associative algebra $H_{1,\eta}(A_{n-1})$ is not simple if and only if $\eta=\frac{q}{m}$, where $q,m$
are mutually prime integers such that $1<m\leqslant n$, and presents the structure of corresponding ideals.

The dimension of the space of supertraces on $H_{1,\,\eta}(A_{n-1})$ is the number
of partitions of $n\geqslant 1$ into the sum of different positive integers, see \cite{KV},
and the space of the traces on $H_{W(A_{n-1})}(\eta)$ is one-dimensional
for $n\geqslant 2$ due to Theorem \ref{main1}, see also \cite{stek}.

So, every algebra $H_{1,\eta}(A_{n-1})$ with $\eta\ne \frac{q}{m}$, where $q,m$
are mutually prime integers, $1<m\leqslant n$, and $n\ge 2$,
is an example
of simple superalgebra with several independent supertraces (see also \cite{K3}).

\begin{conjecture}
{\it
Each of the ideals of $H_{1,\eta}(A_{n-1})$ is
the set of null-vectors of the degenerate bilinear form (\ref{bf})
for some $\varkappa$-trace $sp$
on $H_{1,\eta}(A_{n-1})$.%
}
\end{conjecture}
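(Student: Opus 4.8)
The plan is to reduce the statement to a property of the quotient algebras and then to invoke the classification of two-sided ideals of $H_{1,\eta}(A_{n-1})$ from \cite{IL}.

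Write $A:=H_{1,\eta}(A_{n-1})$. The first step is the elementary observation that ``being the set of null-vectors of $B_{sp}$'' is governed by the quotient algebra: if $sp$ is a homogeneous $\varkappa$-trace and $\mathcal I:=\{v\in A:\ sp(vx)=0\ \text{for all }x\in A\}$ is its ideal of null-vectors, then $sp$ vanishes on $\mathcal I$ (take $x=1$), hence descends to a $\varkappa$-trace on the quotient superalgebra $A/\mathcal I$ whose bilinear form is non-degenerate; conversely, for a graded two-sided ideal $I$, pulling back along $A\twoheadrightarrow A/I$ any trace or supertrace on $A/I$ with non-degenerate form gives a $\varkappa$-trace on $A$ with set of null-vectors exactly $I$. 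It therefore suffices to prove: \emph{every nonzero graded two-sided ideal $I\subset A$ has the property that the quotient superalgebra $A/I$ carries at least one trace or at least one supertrace whose bilinear form} (\ref{bf}) \emph{is non-degenerate.} (One should also check that every two-sided ideal of $A$ is graded; this is automatic from \cite{IL} as soon as the ideals are known to form a finite chain, since an algebra automorphism --- in particular the parity automorphism $f\mapsto(-1)^{\pi(f)}f$ --- must then act on that chain as the identity.)

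Next I would feed in Theorem~5.8.1 of \cite{IL}, which for $\eta=q/m$ with $\gcd(q,m)=1$ and $1<m\le n$ lists the two-sided ideals $I\subset A$ and describes the quotients $A/I$ as matrix algebras over symplectic reflection algebras of type $A$ of strictly smaller rank. For a \emph{maximal} ideal $I$ the quotient $A/I$ is simple, so the set of null-vectors of \emph{any} nonzero $\varkappa$-trace on it --- being a proper two-sided ideal --- is zero; hence it is enough to exhibit one nonzero trace or supertrace on $A/I$. On a matrix algebra $\mathrm{Mat}_d(R)$ such a functional is produced by composing the matrix trace, resp.\ supertrace, with a trace, resp.\ supertrace, on $R$, and for $R=H_{1,\eta}(A_{k-1})$ such a functional always exists: by the results of the present paper and \cite{KV,KT,stek}, the space of supertraces of $R$ has dimension equal to the number of partitions of $k$ into distinct parts (equivalently, into odd parts), which is $\ge 1$, while its space of traces is one-dimensional for $k\ge 2$ (and $R=\mathbb C$ for $k\le 1$). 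This disposes of all maximal ideals.

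The remaining case --- the non-maximal ideals --- is the one I expect to be the real obstacle. Given a chain $I\subsetneq J\subsetneq A$ one must produce a $\varkappa$-trace on $A/I$ whose set of null-vectors is exactly $J/I$; equivalently, one must show that the map sending a trace or supertrace $sp$ of $A$ to the ideal of null-vectors of $B_{sp}$ is onto the \emph{whole} lattice of ideals of $A$, not merely onto its maximal elements. Concretely this would require an explicit handle on all the traces and supertraces of $A$ --- through their restriction to the spherical subalgebra, or through whatever recursive/ground-level construction realises the $T_{W(A_{n-1})}$ traces and $S_{W(A_{n-1})}$ supertraces --- together with a description of how these functionals degenerate along the ideal chain of \cite{IL}, so that the combinatorics of partitions of $n$ into distinct parts (which controls the supertrace spaces of $A$ and of all the quotients) can be matched, term by term, against the length and fine structure of that chain. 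This combinatorial and representation-theoretic bookkeeping --- showing that for each ideal one can choose the relevant $\varkappa$-trace with radical exactly that ideal, and not something larger --- is the heart of the matter, and its delicacy is presumably why the statement is only a conjecture. One can at least test the approach on the completely worked-out cases $H_{1,\eta}(A_1)$ at $\eta=k+\tfrac12$ (see \cite{V}) and $H_{1,\eta}(A_2)$ at $\eta=k+\tfrac12$ and $\eta=k\pm\tfrac13$ (see \cite{K2}), where all ideals and all supertraces are explicitly known.
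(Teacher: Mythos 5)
The statement you are addressing is stated in the paper only as a Conjecture: the authors give no proof of it, so there is nothing to compare your argument against on their side, and your text must be judged as a free-standing attempt. As such it contains a genuine gap, which you yourself identify: the reduction (pull back a $\varkappa$-trace with non-degenerate form (\ref{bf}) from $A/I$ to get a $\varkappa$-trace on $A=H_{1,\eta}(A_{n-1})$ whose radical is exactly $I$) is correct and is the natural first move, but the conjecture requires this for \emph{every} two-sided ideal, and your argument only treats the maximal ones. For a non-maximal $I$ the quotient $A/I$ is not simple, so exhibiting some nonzero trace or supertrace on it is not enough --- one must show its radical is zero rather than a larger ideal $J/I$, and no mechanism for that is offered beyond the (accurate) remark that it would require detailed bookkeeping of how the finitely many traces and supertraces degenerate along the ideal chain of \cite{IL}. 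That is precisely the open content of the conjecture, so the proposal does not prove it.

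A second, more technical weak point: even the maximal-ideal case leans on the assertion that Theorem 5.8.1 of \cite{IL} exhibits the quotients $A/I$ as matrix algebras over symplectic reflection algebras $H_{1,\eta}(A_{k-1})$ of smaller rank. That theorem classifies the ideals (a chain, nontrivial exactly when $\eta=q/m$ with $1<m\leqslant n$), but the clean ``$\mathrm{Mat}_d$ over a smaller type-$A$ algebra'' description of the quotients is not something you can simply cite; it would itself need proof (or a different source), and without it your construction of a nonzero trace or supertrace on $A/I$ by composing with a matrix (super)trace does not get off the ground. The reduction step and the gradedness-of-ideals remark are fine; the rest remains, as in the paper, conjectural.
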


More examples of associative simple (super)algebra with several (super)traces
are presented in Section \ref{non}.

\subsection{Symplectic reflection group}

Let $V={\mathbb  C}^{2N}$ be endowed with a non-degenerate
anti-symmetric
$Sp(2N)$-invariant bilinear form $\omega(\cdot,\cdot)$,
let the vectors $ a_i \in V$, where $i=1,\,...\,,\,2N$, constitute
a basis in $V$.

The matrix $(\omega_{ij}):=\omega(a_i,\,a_j)$ is anti-symmetric and non-degenerate.

Let $x^i$ be the coordinates of ${ x}\in V$, i.e.,  ${
x}= a_i\,x^i$. Then $\omega({ x},\, y)=
\omega_{ij} x^i y^j$ for
any ${ x},\, y \in V$. The indices $i$ are raised and lowered
by means of the forms $(\omega_{ij})$ and $(\omega^{ij})$, where $\omega_{ij}\omega^{kj}=\delta_i^k$.

\begin{definition}
The element $R\in Sp(2N)\subset End V$ is called a {\it symplectic reflection}, if
$rk(R-1)=2$.
\end{definition}

\begin{definition}
Any finite subgroup $G$ of $Sp(2N)$ generated by a set of symplectic reflections
is called a {\it symplectic reflection group}.
\end{definition}

We collect some elementary properties of the elements of the symplectic reflection group
in the following Proposition.

\begin{proposition}\label{collect}
{\it
Let $G$ be a symplectic reflection group and $g\in G$.
Then
\begin{enumerate}
\item \label{Jord}
\vskip -3mm
The Jordan normal form of $g$ is diagonal.
\item
Each eigenvalue of $g$ is a root of unity.
\item
$\mathrm{det}\, g=1$.
\item
If $\lambda $ is an eigenvalue of $g$, then $\lambda ^{-1}$
is also an eigenvalue of $g$.
\item \label{even}
The spectrum of $g$ has an even number of $-1$ and an even number of $+1$.
\item \label{6}
$g^{tr}\omega g=\omega $, where $g^{tr}$ is the transposed of $g$,
 or, equivalently,  $g^k_i \, \omega_{kl} \, g^l_j=\omega_{ij}$.
.
\end{enumerate}
}
\end{proposition}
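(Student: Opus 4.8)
The plan is to reduce everything to two inputs: that $G$ is finite, so every $g\in G$ has finite order $g^m=1$ for some $m\geqslant 1$, and that $G\subset Sp(2N)$, so $g$ leaves the form $\omega$ invariant. The last assertion, $g^{tr}\omega g=\omega$, is just the matrix translation of $g\in Sp(2N)$, so nothing is needed there. For diagonalizability of $g$: the minimal polynomial of $g$ divides $x^m-1$, which has $m$ distinct roots over $\mathbb C$, hence is squarefree, so $g$ is diagonalizable; this simultaneously shows that every eigenvalue of $g$ is an $m$-th root of unity, which is the second assertion.

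For $\mathrm{det}\,g=1$ I would invoke the Pfaffian: from the identity $\mathrm{Pf}(g^{tr}\omega g)=\mathrm{det}(g)\,\mathrm{Pf}(\omega)$, valid for any matrix $g$ and any antisymmetric $\omega$, together with $g^{tr}\omega g=\omega$, one gets $\mathrm{det}(g)\,\mathrm{Pf}(\omega)=\mathrm{Pf}(\omega)$; since $\omega$ is non-degenerate, $\mathrm{Pf}(\omega)\neq 0$ and hence $\mathrm{det}\,g=1$. Taking determinants of $g^{tr}\omega g=\omega$ by itself only yields $\mathrm{det}\,g=\pm 1$, so the Pfaffian (or, equivalently, connectedness of $Sp(2N,\mathbb C)$) is what is really doing the work here.

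The remaining two assertions I would derive from the eigenspace decomposition $V=\bigoplus_{\lambda}V_\lambda$, available from the first assertion, together with $\omega$-invariance. The basic computation is that for $v\in V_\lambda$ and $w\in V_\mu$ one has $\omega(v,w)=\omega(gv,gw)=\lambda\mu\,\omega(v,w)$, so $\omega(V_\lambda,V_\mu)=0$ whenever $\lambda\mu\neq 1$. If $V_\lambda\neq 0$ but $V_{\lambda^{-1}}=0$, then $V_\lambda$ would be $\omega$-orthogonal to every summand, contradicting non-degeneracy of $\omega$; this gives the fourth assertion (and, applying the same reasoning inside the non-degenerate subspace $V_\lambda\oplus V_{\lambda^{-1}}$ in which both $V_\lambda$ and $V_{\lambda^{-1}}$ are isotropic, even equality of the two multiplicities). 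For the fifth assertion, the same orthogonality relations show that $\omega$ restricts to a non-degenerate antisymmetric form on $V_{1}$ and on $V_{-1}$ separately, and a non-degenerate antisymmetric bilinear form can only exist on an even-dimensional space; hence $\dim V_{1}$ and $\dim V_{-1}$ are both even.

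I do not anticipate a genuine obstacle: the proposition packages standard facts about finite-order elements of a complex symplectic group. The only points requiring a little care are distinguishing $\mathrm{det}\,g=1$ from $\mathrm{det}\,g=\pm 1$, which is exactly why the Pfaffian is the right tool, and checking in the fifth assertion that the restriction of $\omega$ to the $\pm 1$-eigenspaces really is non-degenerate, which follows at once from the $\omega$-orthogonality of distinct eigenspaces.
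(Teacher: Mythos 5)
Your proof is correct, and it follows exactly the two inputs the paper itself cites --- finiteness of $G$ (hence $g^m=1$, giving diagonalizability and root-of-unity eigenvalues) and the invariance $g^{tr}\omega g=\omega$ (giving the eigenvalue pairing, the evenness of $\dim\mathrm{Ker}(g\mp1)$, and, via the Pfaffian, $\det g=1$); the paper merely asserts that each item is clear from these facts, so your argument is a faithful and complete elaboration of its intended proof. The one point worth keeping is your observation that $\det g=1$ (rather than $\pm1$) genuinely needs the Pfaffian or the eigenvalue pairing, not just the determinant of the defining relation.
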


\vskip -2mm
Clearly, each item of Proposition \ref{collect} follows either from the fact
that $G\subset Sp(2N)$ or from the fact that $G$ is a finite group.
Item \ref{6} is just the defining property of $Sp(2N)$.

In what follows, $G$ stands for a symplectic reflection group, and
$\R$ stands for the set of all symplectic reflections in $G$.

Let $R\in \R$. Set%
\footnote{Hereafter we denote all the units in groups,
algebras, etc, by 1, and $c\cdot 1$ by $c$ for any number $c$.}
\bee
V_R &:=& Im (R-1) \,, \\
Z_R &:=& Ker (R-1) \,.
\eee

Clearly, $V_R$ and $Z_R$ are symplectically perpendicular, i.e., $\omega(V_R,\,Z_R)=0$,
and  $V=V_R\oplus^\bot Z_R$.
Hereafter  the expression $U\oplus ^{\bot }W$
denotes a direct sum whose summands are symplectically
perpendicular to each other.

So, let $x=x_{{\phantom \,}_{V_R}}+x_{{\phantom \,}_{Z_R}}$ for any $x\in V$,
where $x_{{\phantom \,}_{V_R}}\in V_R$ and $x_{{\phantom \,}_{Z_R}}\in Z_R$.
Set
\be
\omega_R(x,y):=\omega (x_{{\phantom \,}_{ V_R}},\,y_{{\phantom \,}_{V_R}}).
\ee

Item \ref{even} of Proposition \ref{collect}
allows one to introduce the following grading on $\mathbb C[G]$.
Recall that $\varkappa = \pm 1$, and that we consider both values of $\varkappa$.

\begin{definition}
Let the grading $E$ on $\mathbb C[G]$ be defined  by the formula
\be
E(g) := \frac 1 2 \dim {\cal E}(g) \texttt{  for any $g\in G$},
\ee
 where
\be
{\cal E}(g) := Ker(g-\varkappa).
\ee
\end{definition}

For any $g\in  {G} $, the number $E(g)$ is an integer such that $0\leqslant E(g) \leqslant N$.

The following Lemma is crucial in what follows.%
\footnote{
An analogous Lemma is proved in \cite{KT} for the \emph{real orthogonal} matrices
and \emph{reflections} in $\mathbb R^N$.
}

\begin{lemma}\label{grad-1}
{\it
Let $g\in G$, $R\in \R$. If there exist
$c^1,\, c^2 \in Ker(g-\varkappa)$ such that $\omega_R(c^1,\,c^2)\ne 0$,
then
\be
E(Rg)=E(g)-1.
\ee
Besides,
\be
{\cal E}(Rg)=Z_R \cap {\cal E}(g).
\ee
}
\end{lemma}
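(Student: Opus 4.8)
The plan is to work directly with the linear-algebra definitions. Write $\varkappa$-eigenspace $\mathcal E(g)=\mathrm{Ker}(g-\varkappa)$; since $R\in\R$ is a symplectic reflection, $V=V_R\oplus^\bot Z_R$ with $\dim V_R=\mathrm{rk}(R-1)=2$, and $R$ acts as the identity on $Z_R$. The key observation is that $\omega_R$ is, up to the splitting, just the restriction of $\omega$ to the $2$-dimensional symplectic subspace $V_R$, so the hypothesis ``$\omega_R(c^1,c^2)\ne0$ for some $c^1,c^2\in\mathcal E(g)$'' says precisely that the projection of $\mathcal E(g)$ onto $V_R$ (along $Z_R$) is all of $V_R$: indeed a non-degenerate $2$-form on a $2$-space cannot vanish identically on a proper (i.e. $\le 1$-dimensional) subspace.

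First I would establish the inclusion $Z_R\cap\mathcal E(g)\subseteq\mathcal E(Rg)$: if $v\in Z_R$ then $Rv=v$, so $Rgv = \varkappa v$ whenever $gv=\varkappa v$; note here one uses that $g$ preserves $Z_R$ on the relevant vectors — more precisely for $v\in Z_R\cap\mathcal E(g)$ one has $gv=\varkappa v\in Z_R$, hence $Rgv=gv=\varkappa v$. Conversely, to get $\mathcal E(Rg)\subseteq Z_R\cap\mathcal E(g)$ I would argue as follows. Because $R$ and $g$ lie in a finite group, $Rg$ is semisimple (item \ref{Jord} of Proposition \ref{collect}), so $V$ decomposes into eigenspaces of $Rg$; I want to pin down $\dim\mathcal E(Rg)$. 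The cleanest route is a dimension count: from $Z_R\cap\mathcal E(g)\subseteq\mathcal E(Rg)$ we already get $E(Rg)\ge\frac12\dim\big(Z_R\cap\mathcal E(g)\big)$, and since the projection $\mathcal E(g)\to V_R$ is onto (a $2$-dimensional image) with kernel $\mathcal E(g)\cap Z_R$, we get $\dim\big(Z_R\cap\mathcal E(g)\big)=\dim\mathcal E(g)-2$, i.e. $E(Rg)\ge E(g)-1$. For the reverse inequality $E(Rg)\le E(g)-1$ I would use the symplectic structure: $\mathcal E(g)^{\perp_\omega}$ is (by item \ref{6} and semisimplicity of $g$) the sum of the eigenspaces of $g$ for eigenvalues $\lambda\ne\varkappa^{-1}=\varkappa$, so $\mathcal E(g)$ is, after restriction of $\omega$, either symplectic or has a controlled radical; in fact $\mathcal E(g)$ carries the non-degenerate form coming from pairing the $\lambda$ and $\lambda^{-1}$ eigenspaces, which forces $\dim\mathcal E(g)$ even and $\mathcal E(g)$ to be a symplectic subspace on which $\omega_R$ restricts compatibly. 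Comparing $\mathcal E(Rg)$ with $\mathcal E(g)$ on the complement $V_R$ then shows $Rg$ can have $\varkappa$-eigenvectors in $V_R$ only on a subspace of dimension at most $0$ when $\varkappa$ is present with even multiplicity $2$ in $V_R$ — giving $E(Rg)\le E(g)-1$ and hence equality, together with $\mathcal E(Rg)=Z_R\cap\mathcal E(g)$.

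A slicker packaging of the same idea, which I would actually write up: decompose $V=V_R\oplus^\bot Z_R$ and note that both $\mathcal E(Rg)$ and $Z_R\cap\mathcal E(g)$ are subspaces of $V$ on which $Rg$ acts by $\varkappa$; since $R=\mathrm{id}$ on $Z_R$, on $Z_R$ the operators $Rg$ and $g$ agree, giving $Z_R\cap\mathcal E(Rg)=Z_R\cap\mathcal E(g)$. It then remains to show $\mathcal E(Rg)\cap V_R=0$, i.e. $Rg$ has no $\varkappa$-eigenvector with non-zero $V_R$-component; this is where the hypothesis $\omega_R(c^1,c^2)\ne0$ enters, ruling out the degenerate alternative, via the fact that $g(V_R)$ meets $V_R$ and $R$ acts non-trivially only there. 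Combined with $\dim\big(Z_R\cap\mathcal E(g)\big)=\dim\mathcal E(g)-2$ from the surjectivity of the projection, this yields $\mathcal E(Rg)=Z_R\cap\mathcal E(g)$ and $E(Rg)=E(g)-1$.

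The main obstacle I anticipate is the reverse inclusion $\mathcal E(Rg)\subseteq Z_R$, equivalently $\mathcal E(Rg)\cap V_R=0$: one has to genuinely use that $g$ maps $\mathcal E(g)$ into $\mathcal E(g)$ and interacts with the $2$-plane $V_R$ in a way constrained by $\omega$, rather than just counting dimensions; the hypothesis about $\omega_R$ is exactly the input that forbids the bad case where $\mathcal E(Rg)$ would pick up an extra dimension inside $V_R$. Everything else — the containment $Z_R\cap\mathcal E(g)\subseteq\mathcal E(Rg)$, evenness of the dimensions from Proposition \ref{collect}, and the projection argument — is routine.
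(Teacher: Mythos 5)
The routine parts of your plan do match the paper's proof: the inclusion $Z_R\cap{\cal E}(g)\subseteq{\cal E}(Rg)$, the observation that $\omega_R(c^1,c^2)\ne 0$ means the projection of ${\cal E}(g)$ onto $V_R$ along $Z_R$ is surjective, and hence $\dim\bigl(Z_R\cap{\cal E}(g)\bigr)=\dim{\cal E}(g)-2$. But the heart of the lemma --- the reverse inclusion ${\cal E}(Rg)\subseteq Z_R$ --- is never actually proved; you restate it, gesture at ``the symplectic structure'', and in your last paragraph concede it as an anticipated obstacle. Two concrete defects. First, your reformulation ``it remains to show ${\cal E}(Rg)\cap V_R=0$'' is strictly weaker than what is needed: a spurious $\varkappa$-eigenvector of $Rg$ can have nonzero components in both $V_R$ and $Z_R$ simultaneously, so excluding eigenvectors that lie wholly inside $V_R$ proves nothing; likewise your final localization of the danger as ``an extra dimension inside $V_R$'' is misplaced. (A smaller slip: your justification of $Z_R\cap{\cal E}(Rg)=Z_R\cap{\cal E}(g)$, namely that ``$Rg$ and $g$ agree on $Z_R$'', is not literally true since $g$ need not preserve $Z_R$; the correct one-line argument is that for $v\in Z_R$ one has $R^{-1}v=v$, so $Rgv=\varkappa v$ iff $gv=\varkappa v$.) Second, the sentences meant to exclude extra eigenvectors (``comparing ${\cal E}(Rg)$ with ${\cal E}(g)$ on the complement $V_R$ then shows\dots'', ``via the fact that $g(V_R)$ meets $V_R$ and $R$ acts non-trivially only there'') contain no mechanism: they never use the symplectic invariance of $g$ in any checkable way, and semisimplicity plus evenness of multiplicities alone do not suffice.

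What closes the gap in the paper is a genuine computation. One decomposes $V=V_R\oplus^\bot\bigl(Z_{\mathrm{rem}}\oplus{\cal E}_R(g)\bigr)$ with ${\cal E}_R(g):=Z_R\cap{\cal E}(g)$, writes $R$ and $g$ in block form, and uses the eigenvectors $c^1,c^2$ (through the map $\rho$ sending their $V_R$-parts to their $Z_{\mathrm{rem}}$-parts) to parametrize candidate null-vectors of $Rg-\varkappa$. Solving the resulting linear system shows that ${\cal E}(Rg)\supsetneq{\cal E}_R(g)$ would force the existence of a two-dimensional subspace $Z_0\subset Z_{\mathrm{rem}}\subset Z_R$ with $(g-\varkappa)Z_0=V_R$ --- the bad configuration sits inside $Z_R$, not inside $V_R$. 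Feeding this into $g^{\mathrm{tr}}\omega g=\omega$ (item \ref{6} of Proposition \ref{collect}) and reading off the $Z_0$--$Z_0$ block yields $g_{12}^{\mathrm{tr}}\,\omega\big|_{V_R}\,g_{12}=0$ with $g_{12}$ invertible and $\omega\big|_{V_R}$ nondegenerate, a contradiction. An argument of this strength, explicitly exploiting $g\in Sp(2N)$, is exactly the missing piece of your proposal.
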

\begin{proof}
Clearly, $Rgx=\varkappa Rx=\varkappa x$ if $gx=\varkappa x$ and $Rx=x$.
Hence, $Z_R \cap {\cal E}(g) \subset {\cal E}(Rg)$.

Denote
\be
{\cal E}_{ R}(g) := Z_{R}\cap {\cal E}(g).
\ee

Since $\omega_R(c^1,\,c^2)\ne 0$, it follows that the vectors
$c^l_{V_{R}}\in V_R$, where $l=1,2$, are independent,
so the $c^l_{V_{R}}$ constitute a
basis of $V_{R}$ (recall that $\dim V_R=2$).

Clearly, ${\cal E}_{ R}(g)\oplus span (c^1,\, c^2) = {\cal E}(g)$,
implying $\dim {\cal E}_{ R}(g)+2 = \dim {\cal E}(g)$.

It remains to prove that
${\cal E}_R(g) = {\cal E}(Rg)$

Suppose that there exists a vector $u\in {\cal E}(Rg)$ such that $u\notin {\cal E}_R(g)$.
Since $\dim {\cal E}(Rg)$ is even, our supposition implies that
there exist two vectors $u^1,\, u^2 \in {\cal E}(Rg)$ such that
$span (u^1, u^2) \cap {\cal E}_R(g)=0$.

Let $Z_{\mathrm{rem}}\subset Z_R$ be subspace of $Z_R$
such that $Z_R= Z_{\mathrm{rem}}\oplus {\cal E}_{ R}(g)$, so
$\dim Z_{\mathrm{rem}}=2s$, where $s:=N-E(g)$.
%
%
%
Then
\begin{eqnarray}
V &=&V_{R}\oplus ^{\bot }[Z_{\mathrm{rem}}\oplus {\cal E}_{ R}(g)].
\label{1.2.2.1}
\end{eqnarray}

The vectors $c^l$ ($l=1,2$) can be decomposed according to decomposition (\ref{1.2.2.1}):
\be
c^l=c^l_{V_R}+c^l_{\mathrm{rem}}+c^l_{{\cal E}_R(g)}.
\ee

Define a linear map $\rho: \ V_R\mapsto Z_{\mathrm{rem}}$ by the formula
\be
\rho c^l_{V_R}=c^l_{\mathrm{rem}}.
\ee
Clearly,
$x+\rho x\in {\cal E}(g)$ for each $x\in V_R$.

In the decomposition (\ref{1.2.2.1})
the matrices of $R$ and $g$ have the block forms%
\begin{equation}\label{3by3}
R=\left(
\begin{array}{ccc}
R_{2\times 2} & 0 & 0 \\
0 & \mathrm{1}_{2s\times 2s} & 0 \\
0 & 0 & \mathrm{1}_{(2E_{}(g)-2)\times (2E_{ }(g)-2)}%
\end{array}%
\right),\;g=\left(
\begin{array}{ccc}
g_{11} & g_{12}^{\prime } & 0 \\
g_{21}^{\prime } & g_{22}^{\prime } & 0 \\
g_{31}^{\prime } & g_{32}^{\prime } & \varkappa _{(2E_{
}(g)-2)\times (2E_{ }(g)-2)}%
\end{array}%
\right),
\end{equation}%
%
where the blocks of $g$ are of the same sizes as those of $R$.

From previous consideration we know that $g$ in (\ref{3by3}) has
2-dimensional space of eigenvectors $c$ with eigenvalue $\varkappa$, which
can be written in the form%
\begin{equation*}
c=\left(
\begin{array}{c}
x \\
\rho x \\
0%
\end{array}%
\right) , \mbox{ where }
\left(
\begin{array}{c}
x \\
0 \\
0%
\end{array}%
\right) \in V_{R},\;\left(
\begin{array}{c}
0 \\
\rho x \\
0%
\end{array}%
\right) \in Z_{\mathrm{rem}},
\end{equation*}%
i.e., the following relations take place
\begin{equation*}
g_{11}=\varkappa -g_{12}^{\prime }\rho ,\ \ \ g_{21}^{\prime }=(\varkappa
-g_{22}^{\prime })\rho ,\ \ \ g_{31}^{\prime }=-g_{32}^{\prime }\rho .
\end{equation*}

Let us look for null-vectors $u$ of $Rg-\varkappa $ on $V_{R}\oplus
^{\bot }Z_{\mathrm{rem}}$ in the form
\begin{equation*}
u=\left(
\begin{array}{c}
x \\
\rho x+z \\
0%
\end{array}%
\right) ,
\end{equation*}%
which is, in fact, a general form of such $u$.

Since
\begin{equation*}
Rg=\left(
\begin{array}{ccc}
R_{2\times 2}g_{11} & R_{2\times 2}g_{12}^{\prime } & 0 \\
g_{21}^{\prime } & g_{22}^{\prime } & 0 \\
\;g_{31}^{\prime } & g_{32}^{\prime } & \varkappa%
\end{array}%
\right) ,
\end{equation*}%
the equation%
\begin{equation}
(Rg-\varkappa )c_{R}=0  \label{1.2.2.2}
\end{equation}%
gives
\begin{eqnarray}
&&\varkappa (R_{2\times 2}-1)x+R_{2\times 2}g_{12}^{\prime }z=0,
\label{1.2.2.3a} \\
&&\,(g_{22}^{\prime }-\varkappa )z=0,\ \ \ g_{32}^{\prime }z=0.  \label{1.2.2.3b}
\end{eqnarray}

So, if eqs. (\ref{1.2.2.3b}) do not have any nontrivial solutions, eq. (\ref%
{1.2.2.2}) has no nontrivial solutions either. If eqs. (\ref{1.2.2.3b})
have null-vectors $z_{0}$ such that $g_{12}^{\prime }z_{0}=0$, then eq. (\ref%
{1.2.2.3a}) shows that\ $x=0$, and we see that $\left(
\begin{array}{c}
0 \\
z_{0} \\
0%
\end{array}%
\right) \in Z_{\mathrm{rem}}\cap {\cal E}_{ R}(g)$, which is impossible.

So, the only opportunity for $Rg$ to have eigenvalue $\varkappa $ with
multiplicity $> 2E(g)-2$ (see eq. (\ref {3by3})),  is the
existence of a vector %
\begin{equation*}
u=\left(
\begin{array}{c}
0 \\
z \\
0%
\end{array}%
\right)
\in Z_{\mathrm{rem}}
\end{equation*}%
which satisfies eqs. (\ref{1.2.2.3b}) and $g_{12}^{\prime }z\neq 0$, i.e.,
\begin{equation*}
(g-\varkappa )u=\left(
\begin{array}{c}
g_{12}^{\prime }z \\
(g_{22}^{\prime }-\varkappa )z \\
g_{32}^{\prime }z%
\end{array}%
\right) =\left(
\begin{array}{c}
g_{12}^{\prime }z \\
0 \\
0%
\end{array}%
\right) \in V_{R}.
\end{equation*}%
Because the multiplicity of $\varkappa $ in the spectrum of $Rg$ is even, the
supposition that $Rg-\varkappa$ has null-vectors
besides ${\cal E}_R(g)$
leads to existence of a
2-dimensional subspace $Z_{0}\subset Z_{\mathrm{rem}}$ such that%
\begin{equation}
(g-\varkappa )Z_{0}=V_{R}.  \label{1.2.2.4}
\end{equation}

\textbf{Suppose that $Z_{0}\neq 0$}.
Represent $Z_{\mathrm{rem}}$ in the form
$Z_{\mathrm{rem}}=Z_{0}\oplus Z_{\mathrm{r}}$.
In the basis for decomposition
$V=V_{R.}\oplus ^{\bot }[(Z_{0}\oplus Z_{\mathrm{r}}\oplus {\cal E}_{ R}(g)]$,
the matrix $g$ has the form%
\begin{equation*}
g=\left(
\begin{array}{cccc}
g_{11} & g_{12} & g_{13} & 0 \\
g_{21} & g_{22} & g_{23} & 0 \\
g_{31} & g_{32} & g_{33} & 0 \\
g_{41} & g_{42} & g_{43} & \varkappa%
\end{array}%
\right) ,
\end{equation*}%
where%
\begin{eqnarray*}
\left(
\begin{array}{c}
g_{21} \\
g_{31}%
\end{array}%
\right) &=&g_{21}^{\prime },\;g_{41}=g_{31}^{\prime },\;\left(
\begin{array}{cc}
g_{12} & g_{13}%
\end{array}%
\right) =g_{12}^{\prime }, \\
\left(
\begin{array}{cc}
g_{22} & g_{23} \\
g_{32} & g_{33}%
\end{array}%
\right) &=&g_{22}^{\prime },\;\left(
\begin{array}{cc}
g_{42} & g_{43}%
\end{array}%
\right) =g_{32}^{\prime },
\end{eqnarray*}%
\begin{eqnarray*}
g_{11} &=&\varkappa -g_{12}\rho _{1}-g_{13}\rho _{2}, \\
g_{21} &=&(\varkappa -g_{22})\rho _{1}-g_{23}\rho _{2},\;g_{31}=(\varkappa
-g_{33})\rho _{2}-g_{32}\rho _{1}, \\
\;g_{41} &=&-g_{42}\rho _{1}-g_{43}\rho _{2},
\end{eqnarray*}%
and where $\rho _{1}$ and $\rho _{2}$ give the decomposition of $\rho $:%
\begin{equation*}
\rho x=\rho _{1}x+\rho _{2}x, \text{ where } \rho _{1}x\in Z_{0}\text{ , \ }%
\rho _{2}x\in Z_{\mathrm{r}}.
\end{equation*}%
Due to condition (\ref{1.2.2.4}), the matrix $g$ acquires the form%
\begin{equation}
g=\left(
\begin{array}{cccc}
g_{11} & g_{12} & g_{13} & 0 \\
g_{21} & \varkappa & g_{23} & 0 \\
g_{31} & 0 & g_{33} & 0 \\
g_{41} & 0 & g_{43} & \varkappa%
\end{array}%
\right) ,\;\det g_{12}\neq 0 ,  \label{1.2.2.5}
\end{equation}%
and the symplectic form $\omega $ has the
shape
$\omega =\left(
\begin{array}{cccc}
\omega _{\text{2}\times \text{2}}^{R} & 0 & 0 & 0 \\
0 & \omega _{\text{2}\times \text{2}}^{22} & \omega ^{23} & \omega ^{24} \\
0 & \omega ^{32} & \omega ^{33} & \omega ^{34} \\
0 & \omega ^{42} & \omega ^{43} & \omega ^{44}%
\end{array}%
\right)$, where $\omega _{\text{2}\times \text{2}}^{R}$ is non-degenerate.
 Due to (\ref{1.2.2.5}), the equality
 $\omega =g^{\mathrm{tr}}\omega g$
gives for the $22$-block:%
\begin{eqnarray*}
&&\omega _{\text{2}\times \text{2}}^{22}=\left. \left(
\begin{array}{cccc}
\ast & \ast & \ast & \ast \\
g_{12}^{\mathrm{tr}} & \varkappa & 0 & 0 \\
\ast & \ast & \ast & \ast \\
0 & 0 & 0 & \varkappa%
\end{array}%
\right) \left(
\begin{array}{cccc}
\omega _{\text{2}\times \text{2}}^{R} & 0 & 0 & 0 \\
0 & \omega _{\text{2}\times \text{2}}^{22} & \ast & \ast \\
0 & \ast & \ast & \ast \\
0 & \ast & \ast & \ast%
\end{array}%
\right) \left(
\begin{array}{cccc}
\ast & g_{12} & \ast & 0 \\
\ast & \varkappa & \ast & 0 \\
\ast & 0 & \ast & 0 \\
\ast & 0 & \ast & \varkappa%
\end{array}%
\right) \right| _{22}= \\
&&\,=\left. \left(
\begin{array}{cccc}
\ast & \ast & \ast & \ast \\
g_{12}^{\mathrm{tr}} & \varkappa & 0 & 0 \\
\ast & \ast & \ast & \ast \\
0 & 0 & 0 & \varkappa%
\end{array}%
\right) \left(
\begin{array}{cccc}
\ast & \omega _{\text{2}\times \text{2}}^{R}g_{12} & \ast & \ast \\
\ast & \varkappa \omega _{\text{2}\times \text{2}}^{22} & \ast & \ast \\
\ast & \ast & \ast & \ast \\
\ast & \ast & \ast & \ast%
\end{array}%
\right) \right| _{22}=g_{12}^{\mathrm{tr}}\omega _{\text{2}\times \text{2}%
}^{R}g_{12}+\omega _{\text{2}\times \text{2}}^{22}
\end{eqnarray*}%
or $g_{12}^{tr}\omega _{\text{2}\times \text{2}}^{R}g_{12}=0$ which
contradicts the nondegeneracy of $g_{12}$.

So, $Z_0=0$ and
the matrix $Rg-\varkappa$ has no null-vectors besides ${\cal E}_R(g)$.
\end{proof}

\section{Symplectic reflection algebra}
The superalgebra $ H_{1, \,\eta}(G)$
is a deform of the skew product%
\footnote{
Let ${\cal A}$ and ${\cal B}$ be superalgebras, and ${\cal A}$ a ${\cal B}$-module.
We say that the superalgebra
${\cal A}\ast {\cal B}$ is a {\it skew product} of ${\cal A}$ and ${\cal B}$ if
${\cal A}\ast {\cal B}={\cal A} \otimes {\cal B}$ as a superspace and
$(a_1\otimes b_1)\ast (a_2\otimes b_2)=a_1 b_1(a_2)\otimes b_1 b_2$.
The element $b_1(a_2)$ may include a sign factor imposed by the Sign Rule.
}
 of the Weyl algebra $W_N$ and the group algebra
of a finite subgroup $G \subset Sp(2N)$
generated by symplectic reflections, see
Definition \ref{defpage} below.

\subsection{Definitions}

Let $ {\mathbb C}[G] $ be the {\it group algebra} of $ {G} $, i.e., the
set of all linear combinations $\sum_{g\in  {G} } \alpha_g \bar g$,
where $\alpha_g \in {\mathbb C}$, and we temporarily write $\bar g$
to distinguish $g$ considered as an element of $ {G} \subset End(V)$
from the same element $\bar g \in  {\G} $
considered as an element of the group algebra.
The addition in $ {\G} $ is defined as follows:
$$
\sum_{g\in  {G} } \alpha_g \bar g + \sum_{g\in  {G} } \beta_g \bar g
= \sum_{g\in  {G} } (\alpha_g + \beta_g) \bar g
$$
and the multiplication is defined by setting
$\overline {g_1\!}\,\, \overline {g_2\!} = \overline {g_1 g_2}$.

Let $\eta$ be a function on ${\R}$, i.e., a set of constants $\eta_R$ with $R\in\R$ such that
$\eta_{R_1}=\eta_{R_2}$ if $R_1$ and $R_2$ belong to
one conjugacy class of $ {G} $.

\begin{definition}
\label{defpage}
The algebra $H_{t,\eta}(G)$, where $t\in \mathbb C$ is an associative algebra with unity {\bf 1};
it is the algebra
${\mathbb C}[V]$ of polynomials in the elements of $V$ with coefficients
in the group algebra ${\mathbb C}[G]$ subject to the relations
\bee
g x&=&g(x) g %
\mbox{ for any } g\in  {G}
                   \mbox{ and } x \in V,
                   \mbox{ where } g(x)= a_i g^i_j x^j
                   \mbox{ for }x=a_ix^i,\\
\label{rel}
 \!\!\!\!\!\!\!\!\!\!\!\! [ x  , y] &=& t \omega(x,y)
 +
       \sum_{R\in\R} \eta_R
\omega_R(x,y)R
\mbox{ for any  $x,y\in V$}.
\eee

The algebra $H_{t,\eta}(G)$ is called a {\it symplectic reflection algebra},
see \cite{sra}.
\end{definition}

The commutation relations (\ref{rel}) suggest
to define the {\it parity} $\pi$ by setting:
\be
\pi (x)=1,\ \pi (g)=0
\ \mbox{ for any }x\in V, \mbox{ and }g\in G,
\ee
enabling one to consider $\HH$ as an associative {\it superalgebra}.

We consider the case $t\ne 0$ only, which is equivalent to the case $t=1$.

\subsection{Bases of eigenvectors}

We say that a polynomial $f\in {\mathbb C}[V]$ is {\it monomial}
if it can be expressed in the form $f=u_1u_2...u_k$, where
$u_i\in V$.

We say that an element $h\in \HH$  is {\it monomial}
if it can be expressed in the form $h=u_1u_2...u_k g$, where
$u_i\in V$ and $g\in G$.

Due to item \ref{Jord} of Proposition \ref{collect}, for each $g\in G$,
there exists a basis ${\mathfrak B}_g=\{b_1,\,...\,,\,b_{2N}\}$ of $V$ such that
(no summation here)
\be\label{basis}
g(b_I)=\lambda_I b_I, \mbox{ where } I=1,2,...,2N,
\ee
or, equivalently,
\be\label{basis1}
g b_I=\lambda_I b_I g.
\ee

We can represent any element $h\in \HH$ in the form $h=\sum_{g\in G}h_g g$,
where the polynomials $h_g$ depend on $b_I\in {\mathfrak B}_g$.

\begin{definition}\label{reg-spec} Let $b_I\in {\mathfrak B}_g$.
A monomial $b_{I_1}\dots b_{I_k}g$ is said to be {\it regular}
if $\lambda_{I_s}\ne \varkappa$
for some $s$, where $1\leq s \leq k$, and
{\it special} if $\lambda_{I_s} = \varkappa$
for each $s$, where $1\leq s \leq k$.
\end{definition}

Set also:
\bee
F_{IJ}&:=& [b_I,\, b_J], \\
\label{calc}
{\cal C}_{IJ} &:=& \omega(b_I,\, b_J),\\
f_{IJ} &:= & [b_I,\, b_J]- {\cal C}_{IJ}.
\label{37}
\eee

\begin{lemma}\label{lemma4}
{\it
Let $g\in  {G}$.
Let
$b_I, b_J\in {\cal E} (g)$.
Then
\be\label{main}
E(f_{IJ} \sigmama )=E(\sigmama )-1
 .
\ee
}
\end{lemma}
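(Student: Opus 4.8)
The plan is to reduce Lemma~\ref{lemma4} to Lemma~\ref{grad-1} by unwinding the definition of $f_{IJ}$ via the defining commutation relation~(\ref{rel}). Recall that $f_{IJ} = [b_I, b_J] - {\cal C}_{IJ} = [b_I,b_J] - \omega(b_I,b_J)$, and that by~(\ref{rel}) with $t=1$ we have $[b_I,b_J] = \omega(b_I,b_J) + \sum_{R\in\R}\eta_R\,\omega_R(b_I,b_J)\,R$. Hence $f_{IJ} = \sum_{R\in\R}\eta_R\,\omega_R(b_I,b_J)\,R$, an element of the group algebra $\mathbb{C}[G]$, and therefore $f_{IJ}\,g = \sum_{R\in\R}\eta_R\,\omega_R(b_I,b_J)\,Rg$. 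So the claim $E(f_{IJ}g) = E(g)-1$ is a statement about a linear combination of elements $Rg$ with $R$ ranging over $\R$, and it should follow from applying Lemma~\ref{grad-1} to each term that actually appears (i.e.\ each $R$ with $\omega_R(b_I,b_J)\neq 0$), provided one can rule out cancellation and provided at least one such term occurs.

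First I would observe that since $b_I, b_J \in {\cal E}(g) = \mathrm{Ker}(g-\varkappa)$, the hypothesis of Lemma~\ref{grad-1} (with $c^1 = b_I$, $c^2 = b_J$) is exactly that $\omega_R(b_I,b_J)\neq 0$. So for every $R\in\R$ with $\omega_R(b_I,b_J)\neq 0$, Lemma~\ref{grad-1} gives $E(Rg) = E(g)-1$ and moreover ${\cal E}(Rg) = Z_R\cap{\cal E}(g)$. Thus every term $Rg$ contributing to $f_{IJ}g$ sits in the $E = E(g)-1$ graded component of $\mathbb{C}[G]$, so $E(f_{IJ}g) \leqslant E(g)-1$ automatically, with equality unless $f_{IJ}g = 0$ (taking the convention that $E$ of the zero element is $-\infty$, or else the statement is understood for $f_{IJ}g\neq 0$). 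The two things left to establish are: (a) that the sum $\sum_R \eta_R\,\omega_R(b_I,b_J)\,Rg$ is nonzero, and in particular that \emph{some} $R\in\R$ satisfies $\omega_R(b_I,b_J)\neq 0$ (together with $\eta_R\neq 0$); and (b) that distinct reflections $R$ give linearly independent group elements $Rg$, so no cancellation can occur among the nonzero terms.

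Point~(b) is immediate: if $R_1 g = R_2 g$ in $G$ then $R_1 = R_2$, so the $Rg$ appearing are distinct group elements, hence linearly independent in $\mathbb{C}[G]$, and the coefficient of each is $\eta_R\,\omega_R(b_I,b_J)$. The real content is point~(a): one must know that $f_{IJ}$ itself is nonzero, i.e.\ that there is at least one symplectic reflection $R$ with $\eta_R\,\omega_R(b_I,b_J)\neq 0$. I expect this to be the main obstacle, and I anticipate it is handled by a nondegeneracy argument: the restriction of $\omega$ to the span of the generating reflections' images is nondegenerate (the $V_R$ for $R\in\R$ span $V$, since $G$ acts on $V=\mathbb{C}^{2N}$ and is generated by the $R$'s), and one uses that $\omega = \sum_{R} (\text{something})\,\omega_R$ up to the group-algebra structure — more precisely, one should invoke or reprove the identity expressing $\omega$ in terms of the $\omega_R$ on the relevant subspace, or simply note that if $\omega_R(b_I,b_J)=0$ for all $R$ then, because the $V_R$ span $V$, one would get $\omega(b_I,b_J)$-type constraints forcing $b_I$ or $b_J$ to be degenerate vectors, contradicting that ${\cal E}(g)$ carries a nondegenerate symplectic form (which holds because, for $\varkappa = \pm 1$, $\mathrm{Ker}(g-\varkappa)$ is a symplectic subspace — its symplectic complement being $\bigoplus_{\mu\neq\varkappa}\mathrm{Ker}(g-\mu)$, an observation grounded in Proposition~\ref{collect}). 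Then, for generic (or admissible) $\eta$ the coefficients $\eta_R$ are nonzero; and in fact the cleanest route, which I would adopt, is to prove the sharper statement that the image of $f_{IJ}$ in the graded piece is literally $\sum_{R:\,\omega_R(b_I,b_J)\neq 0}\eta_R\,\omega_R(b_I,b_J)\,Rg$ and that this is nonzero because we may choose the indices $I,J$ — or rather, the statement is about a fixed pair with $b_I,b_J\in{\cal E}(g)$, so one instead argues that summing/pairing against a suitable test functional detects a nonzero contribution. I would close the argument by combining: $E(f_{IJ}g)\leqslant E(g)-1$ from Lemma~\ref{grad-1} applied termwise, together with non-vanishing from the symplectic nondegeneracy of ${\cal E}(g)$, to conclude $E(f_{IJ}g) = E(g)-1$.
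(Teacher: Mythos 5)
Your first paragraph is, in substance, the paper's entire proof: by the defining relation (\ref{rel}) with $t=1$ one has $f_{IJ}=\sum_{R\in\R}\eta_R\,\omega_R(b_I,b_J)\,R$, and for every $R$ that contributes a nonzero coefficient the hypothesis of Lemma \ref{grad-1} (with $c^1=b_I$, $c^2=b_J\in{\cal E}(g)=Ker(g-\varkappa)$) is satisfied, so $E(Rg)=E(g)-1$ for each such term. The paper's proof is exactly this one-line reduction and stops there.

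Where you go astray is in declaring the non-vanishing of $f_{IJ}g$ to be ``the real content.'' It is neither needed nor true in general, and the argument you sketch for it would fail. The equality (\ref{main}) must be read as a grading statement: every group element occurring in the expansion of $f_{IJ}g$ in $\mathbb C[G]$ has grading $E(g)-1$; this is precisely what is used later (eqs. (\ref{000})--(\ref{step2}) and eq. (\ref{p12}) in Appendix \ref{appc}), where a vanishing $f_{IJ}g$ is harmless. And $f_{IJ}g$ can genuinely vanish: for $\eta\equiv 0$ (the algebra $H_{1,0}(G)$ treated in Section \ref{non}) one has $f_{IJ}=0$ identically; and even for nonzero $\eta$, take $G=\{1,R\}\subset Sp(4)$ generated by a single symplectic reflection, $\varkappa=1$, $g=1$, and $b_I,b_J$ a symplectic pair lying in $Z_R$ — then $\omega_R(b_I,b_J)=0$ for the only reflection present, so $f_{IJ}=0$, although $\omega$ restricted to ${\cal E}(g)=V$ is nondegenerate. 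This example also shows why your proposed nondegeneracy argument cannot be repaired: nondegeneracy of $\omega$ on ${\cal E}(g)$ says nothing about the rank-two forms $\omega_R$ evaluated at one fixed pair, the subspaces $V_R$ need not span $V$ (here they span a proper $G$-invariant subspace, complementary to the fixed space), and ``generic $\eta$'' is not available since the theorems are stated for arbitrary $\eta$, including $\eta=0$. So keep your first paragraph, drop points (a) and (b) (point (b) is correct but superfluous), and phrase the conclusion as: every $\tilde g\in G$ occurring in $f_{IJ}g$ satisfies $E(\tilde g)=E(g)-1$.
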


\indent\begin{proof}
Proof follows from eq. (\ref{rel}) (recall that $t=1$) and Lemma \ref{grad-1}.
\end{proof}

\subsection{Partial orderings in $H_{1,\eta}(G)$}

\begin{definition}\label{ordering}
Let $f_1, f_2 \in {\mathbb C}[V]$ be monomials
either both even or both odd.
Let $g_1, g_2 \in G$.
We say $f_1 g_1 < f_2 g_2$ if either
 $deg f_1<deg f_2$ or $deg f_1 = deg f_2$ and $E(g_1) < E(g_2)$.
\end{definition}

It is easy to describe all {\it minimal elements} in $H_{1,\eta}(G)$,
i.e., the elements $f_{min}$ such that there exists $f\in H_{1,\eta}(G)$
such that $f_{min} < f$, and
there are no  elements $f_<$ such that $f_< < f_{min}$:

a) In the even subspace of $H_{1,\eta}(G)$ the minimal elements are
$g\in G$ such that $E(g)=0$.

b) In the odd subspace of $H_{1,\eta}(G)$, the minimal elements are
the elements of the form $xg$, where $x\in V$, $g\in G$ and $E(g)=0$.

\begin{proposition}\label{oddmin}
{
\it
Let $\varkappa=1$. Then for each trace $tr$ and for each odd minimal
element $xg$, the following equality takes place
\be\label{oddminzero}
tr(xg)=0.
\ee
}
\end{proposition}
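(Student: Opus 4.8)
The plan is to exploit the defining relation of the trace directly: for an odd minimal element $xg$ with $x\in V$, $g\in G$, $E(g)=0$, I want to write $xg$ (up to lower/equal-order corrections that already vanish) as a commutator $[f,h]$ of homogeneous elements, and then invoke $tr([f,h])=0$, which holds because $tr$ is a trace (here $\varkappa=1$, so there is no sign and the trace property applies to all elements regardless of parity). The natural candidate is to produce $x$ as a commutator of two elements of $V$, but $[y,z]=\omega(y,z)+\sum_R\eta_R\omega_R(y,z)R$ lies in $\mathbb{C}[G]$, not in $V$, so that does not work directly. Instead I would use the group element $g$ itself: since $E(g)=0$, the operator $g$ has no eigenvalue $1$ (as $\varkappa=1$), so $g-1$ is invertible on $V$, and hence there exists $y\in V$ with $(g-1)(y)=x$, i.e. $g(y)-y = x$ in $V$.

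With such a $y$, compute $[y, g] = yg - gy = yg - g(y)g = (y - g(y))g = -xg$ using the straightening relation $gy = g(y)g$. Therefore $xg = -[y,g] = [g,y]$. Now $y$ is odd and $g$ is even, but since $\varkappa=1$ the function $tr$ satisfies $tr(ab)=tr(ba)$ for all $a,b$, so $tr([g,y]) = tr(gy) - tr(yg) = 0$. Hence $tr(xg)=0$, which is exactly \eqref{oddminzero}. The only thing to check carefully is the claim that $g-1$ is invertible on $V$: this is where $E(g)=0$ enters. By definition $E(g)=\tfrac12\dim\mathcal{E}(g)$ with $\mathcal{E}(g)=Ker(g-\varkappa)=Ker(g-1)$, so $E(g)=0$ forces $Ker(g-1)=0$; since $V$ is finite-dimensional this gives invertibility of $g-1$, so $y:=(g-1)^{-1}(x)$ exists in $V$.

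The argument is short and the potential obstacle is essentially bookkeeping: one must make sure that $(g-1)^{-1}(x)$ is genuinely an element of $V$ and not of some larger space — but $g-1$ is a linear automorphism of $V$ itself, so this is immediate. A secondary point worth a sentence is that the identity $[g,y]=xg$ is an exact identity in $H_{1,\eta}(G)$, with no error terms, because the straightening relation $gy=g(y)g$ is exact; thus one does not even need the partial ordering machinery here, only the single relation $gx=g(x)g$ together with the trace property. (For $\varkappa=-1$ this argument breaks down in two ways: $g-1$ need not be invertible when $E(g)=0$ refers to the $(-1)$-eigenspace, and more importantly the supertrace picks up a sign on $[g,y]$ only when both entries are odd, which $g$ is not — so the genuine obstruction in that case is the invertibility of $g-1$, which is why the statement is specific to $\varkappa=1$.)
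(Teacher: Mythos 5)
Your proof is correct and is essentially the paper's argument in basis-free form: the paper diagonalizes $g$ and notes $tr(b_I g)=tr(gb_I)=\lambda_I tr(b_I g)$ with $\lambda_I\neq 1$, which is exactly your observation that $xg=[g,y]$ with $y=(g-1)^{-1}(x)$ (the inverse existing precisely because $E(g)=0$ and $\varkappa=1$), combined with cyclicity of the trace. The mechanism — the straightening relation $gy=g(y)g$, absence of eigenvalue $1$, and $tr$ vanishing on commutators — is the same in both versions.
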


\begin{proof}
Since $\varkappa=1$ and $E(g)=0$, the element $g$ does not have
eigenvalue $+1$.

Decompose $x\in V$ in the basis ${\mathfrak B}_g$:
$x=b_I x^I $, where $g(b_I)=\lambda_I b_I$ and $\lambda_I\ne 1$ for
any $I=1,\dots ,2N$.

Further, $tr(b_I g)=tr(gb_I)=tr(g(b_I) g)=\lambda_I tr(b_I g)$, which implies
$tr(b_I g)=0$ and, as a consequence, $tr(xg)=0$.
\end{proof}

Consider the defining relations (\ref{scom})
as a system of linear equations for the linear function $\str$.
Clearly, this system is equivalent to the following two its subsystems:
\bee\label{ma1}
&& \str \left( [b_I , P (a) \sigmama
]_\varkappa\right)=0,\\
\label{ma2}
&&  \str \left(  \tau^{-1} P (a)\sigmama\tau
\right)=
\str \left( P (a) \sigmama
\right)
\eee
for all monomials
$P\in {\mathbb C}[V]$, $b_I\in {\mathfrak B}_g$, and $g,\tau\in  {G} $.

If the $\varkappa$-trace is  either even or the $\varkappa$-trace is odd and $\varkappa=1$, then
eq. (\ref{ma1}) can be rewritten in the form
\be\label{evenform}
\str \left( b_I  P (a) \sigmama-
\varkappa P (a) \sigmama b_I \right)=0.
\ee

Eq. (\ref{evenform}) enables us to express a $\varkappa$-trace of any even monomial in $\HH$
in terms of the $\varkappa$-trace of even minimal elements. Besides, it implies that each
odd trace on $\HH$ is equal to zero.
Both these statements can be proved in a finite number of the following step operations.

{\bf Regular step operation.}
Let $b_{I_1}b_{I_2}\,\dots\,b_{I_k}g$ be a regular monomial.
Up to a polynomial of lesser degree, this monomial can be expressed in a form
such that $\lambda_{I_1}\ne \varkappa$.

Then
$$
\str(b_{I_1}b_{I_2}\,\dots\,b_{I_k}g)=\varkappa \str(b_{I_2}\,\dots\,b_{I_k}gb_{I_1})=
\varkappa \lambda_{I_1}\str(b_{I_2}\,\dots\,b_{I_k}b_{I_1}g),
$$
which implies
$$
\str(b_{I_1}b_{I_2}\,\dots\,b_{I_k}g)-
\varkappa \lambda_{I_1}\str(b_{I_1}b_{I_2}\,\dots\,b_{I_k}g)
=\varkappa \lambda_{I_1}\str([b_{I_2}\,\dots\,b_{I_k}, \, b_{I_1}]\,g).
$$
Thus,
\be\label{lessdeg}
\str(b_{I_1}b_{I_2}\,\dots\,b_{I_k}g)=
 \frac   {\varkappa \lambda_{I_1}} {1-\varkappa \lambda_{I_1}} \str([b_{I_2}\,\dots\,b_{I_k}, \, b_{I_1}]\,g).
\ee

This step operation expresses the $\varkappa$-trace of any regular
degree $k$ monomial in terms of the $\varkappa$-trace of degree
$k-2$ polynomials.

{\bf Special step operation.}
Let $M\defeq b_{I_1}b_{I_2}\,\dots\,b_{I_k}g$ be a special monomial and $E(g)=l>0$.
The monomial $M$ can be expressed in the form
$$
M=b_{I}^p b_{J}^q \,b_{L_1}\,\dots\,b_{L_{k-p-q}}g +
\mbox{ a lesser-degree-polynomial},
$$
where
\bee
&& 0\leqslant p,q \leqslant k,\quad p+q\leqslant k,\nn
&& \lambda_I=\lambda_J=\lambda_{L_s}=\varkappa \quad \mbox{for } s=1,\,..., k-p-q, \\
&& {\cal C}_{IJ}=1,\quad {\cal C}_{IL_s}=0,\quad {\cal C}_{JL_s}=0\quad   \mbox{for } s=1,\,..., k-p-q
\,\,. \nonumber
\eee
Let $M'\defeq b_{I}^p b_{J}^q \,b_{L_1}\,\dots\,b_{L_{k-p-q}}\,$ and derive the equation for $\str (M'g)$. Since
$$
\str (b_J b_I M'g)=\varkappa \str(b_I M'gb_J)=
\str(b_I M'b_Jg),
$$
it follows that
\be\label{so}
\str([b_I M',\,b_J]g)=0.
\ee

Since $[b_I M',\,b_J]$ can be expressed in the form:
\bee
[b_{I}^{p+1} b_{J}^q \,b_{L_1}\,\dots\,b_{L_{k-p-q}},\, b_J]
&=&
\sum_{t=0}^p b_I^{\,t}(1+f_{IJ})b_I^{p-t}b_{J}^q \,b_{L_1}\,\dots\,b_{L_{k-p-q}}
\!\!
+
\nn
&+&
\!\! \!\!
\sum_{t=1}^{k-p-q}b_{I}^{p+1} b_{J}^q \,b_{L_1}\,\dots\, b_{L_{t-1}}\, f_{L_t\, J}
\, b_{L_{t+1}}\dots\,  b_{L_{k-p-q}}  \,,
\eee
it follows that eq. (\ref{so}) can be rewritten in the form
\bee\label{000}
(p+1)\str(M'g)= &-& \str \left(
\sum_{t=0}^p b_I^tf_{IJ}b_I^{p-t}b_{J}^q \,b_{L_1}\,\dots\,b_{L_{k-p-q}}g  +\right. \nn
&+&
\left. \sum_{t=1}^{k-p-q}b_{I}^{p+1} b_{J}^q \,b_{L_1}\,\dots\, b_{L_{t-1}}\, f_{L_t\, J}
\, b_{L_{t+1}}\dots\,  b_{L_{k-p-q}}g \right).
\eee

Due to Lemma \ref{grad-1} it is easy to see that eq. (\ref{000}) can be rewritten in the form
\be\label{step2}
\str(M'g)=\sum_{\tilde g\in  {G} :\, E(\tilde g)=E(g)-1} \str(P_{\tilde g} (a_{i})\tilde g),
\ee
where the $P_{\tilde g}$ are some polynomials such that $\deg P_{\tilde g} = \deg M'$.

So, the special step operation expresses the $\varkappa$-trace of a special
polynomial in terms
of the $\varkappa$-trace of polynomials  lesser in the sense of the ordering
introduced by Definition
\ref{ordering}.

Thus, we showed that it is possible to express the $\varkappa$-trace of any polynomial
as a linear combination of the $\varkappa$-trace of minimal elements of $\HH$
using a finite number of regular and special step operations.

Since each step operation is manifestly $ {G} $-invariant,
the resulting $\varkappa$-trace is also $ {G} $-invariant
if the $\varkappa$-trace
of minimal elements of $\HH$ is
$ {G} $-invariant.

Due to Proposition \ref{oddmin}, each trace of any odd minimal element is zero,
so each odd trace is zero.
But since each odd trace is also a supertrace, we can say
that each odd $\varkappa$-trace is zero.

These arguments proved the following Theorem and Proposition:

\begin{theorem}\label{even1}
{\it
Each nonzero $\varkappa$-trace on $H_{1,\,\eta}(G)$ is even.
}\end{theorem}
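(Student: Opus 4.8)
The plan is to reduce the statement to the vanishing of every \emph{odd} $\varkappa$-trace and then to invoke the reduction machinery assembled in the paragraphs preceding the theorem. First I would note that the defining relation (\ref{scom}) is homogeneous with respect to the parity $\pi$: taking $f,g$ homogeneous in (\ref{scom}), the sign $\varkappa^{\pi(f)\pi(g)}$ depends only on parities, so if $\str=\str_++\str_-$ is the decomposition of an arbitrary $\varkappa$-trace into its even and odd parts, then both $\str_+$ and $\str_-$ are again $\varkappa$-traces. Hence it suffices to show that every odd $\varkappa$-trace vanishes identically; a nonzero $\varkappa$-trace then necessarily has a nonzero even part and, its odd part being zero, is itself even — which is the assertion of Theorem \ref{even1}.

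Second, I would use the observation recorded earlier that an odd trace and an odd supertrace are one and the same notion; so a nonzero odd $\varkappa$-trace, for either value of $\varkappa$, may be regarded as an odd trace, i.e. we may set $\varkappa=1$. An odd linear function vanishes by definition on all even elements of $\HH$, so all that remains is to prove that it vanishes on every odd monomial $fg$ with $f\in{\mathbb C}[V]$, $g\in G$.

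Third — and this is where the real work sits, although it is essentially already carried out above — I would apply the regular and special step operations. For $\varkappa=1$ equation (\ref{ma1}) takes the form (\ref{evenform}), and formulas (\ref{lessdeg}) and (\ref{step2}) allow one, in finitely many steps, to express $\str$ of an arbitrary odd polynomial as a linear combination of $\str$ of odd minimal elements, namely elements $xg$ with $x\in V$, $g\in G$ and $E(g)=0$; here one uses that the ordering of Definition \ref{ordering} is well-founded on monomials of a fixed parity, so the recursion terminates. Finally, Proposition \ref{oddmin} (applicable precisely because we are now in the case $\varkappa=1$, where $E(g)=0$ means $g$ has no eigenvalue $+1$) gives $\str(xg)=0$ for every such minimal element. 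Combining these facts, $\str$ vanishes on all odd monomials, hence on all of $\HH$, so $\str=0$, which completes the proof.

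I expect the only genuinely delicate point to be the bookkeeping of the last paragraph: checking that iterating the step operations really does terminate and really does leave only odd minimal elements of the stated form. Since the step operations, their $G$-invariance, and their termination have already been established in the text culminating in the paragraph beginning ``Thus, we showed that it is possible to express the $\varkappa$-trace of any polynomial \dots,'' in this proof that delicate point reduces to a clean appeal to what precedes rather than to any new estimate or construction.
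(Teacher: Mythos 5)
Your proposal is correct and follows essentially the same route as the paper: the text preceding the theorem reduces any $\varkappa$-trace via the regular and special step operations to its values on minimal elements, kills the odd minimal elements by Proposition \ref{oddmin} in the trace case, and then uses the identification of odd traces with odd supertraces to conclude that every odd $\varkappa$-trace vanishes. Your explicit remark that the even and odd parts of a $\varkappa$-trace are themselves $\varkappa$-traces is exactly the (implicit) decomposition the paper relies on, so nothing substantive differs.
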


\begin{proposition}\label{tomin}
{\it
Each $\varkappa$-trace on $H_{1,\,\eta}(G)$ is completely defined by its values on
the minimal
elements of $G$.
}
\end{proposition}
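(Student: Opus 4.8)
The plan is to organize the ``regular'' and ``special step operations'' recalled above into a well-founded induction on the partial ordering of Definition \ref{ordering}. First I would write $\str=\str_++\str_-$ with $\str_+$ even and $\str_-$ odd; both summands are again $\varkappa$-traces. The odd part is disposed of at once: an odd $\varkappa$-trace is an odd trace irrespective of the sign of $\varkappa$, so eq.~(\ref{evenform}) applies to it (with $\varkappa=1$), and Proposition \ref{oddmin} gives $\str_-(xg)=0$ on every odd minimal element; the reduction below then propagates this to all monomials, forcing $\str_-\equiv 0$, so $\str_-$ is trivially determined by its vanishing on minimal elements --- this is also what yields Theorem \ref{even1}. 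It therefore remains to treat an even $\varkappa$-trace, for which eq.~(\ref{evenform}) is available.

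For the even case, fix for each $g\in G$ the eigenbasis ${\mathfrak B}_g$ of (\ref{basis}) and expand any element of $\HH$ into monomials $b_{I_1}\dots b_{I_k}g$. The ordering of Definition \ref{ordering} is well-founded since $\deg$ is a non-negative integer and $0\le E(g)\le N$, so it suffices to express the $\varkappa$-trace of a non-minimal monomial through $\varkappa$-traces of strictly smaller ones. If the monomial is regular, one may (after a reordering of factors that changes it by a lower-degree polynomial) assume $\lambda_{I_1}\ne\varkappa$; then $1-\varkappa\lambda_{I_1}\ne 0$ because $\varkappa^{-1}=\varkappa$, and cyclicity in the form (\ref{evenform}) yields (\ref{lessdeg}), dropping the degree by two. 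If the monomial is special with $E(g)>0$, I would bring it to the normal form $b_I^p b_J^q b_{L_1}\dots b_{L_{k-p-q}}g$ with ${\cal C}_{IJ}=1$ and ${\cal C}_{IL_s}={\cal C}_{JL_s}=0$, apply cyclicity in the form (\ref{so}), expand the commutator $[b_I M',b_J]$, and obtain (\ref{000}); Lemma \ref{lemma4} (equivalently Lemma \ref{grad-1}) then shows every term on the right is supported on group elements $\tilde g$ with $E(\tilde g)=E(g)-1$, so (\ref{000}) rewrites as (\ref{step2}), i.e.\ in terms of monomials of the same degree and strictly smaller $E$. Either way the $\varkappa$-trace of the monomial is reduced to $\varkappa$-traces of strictly smaller elements, and the induction bottoms out exactly at the minimal elements, proving the claim.

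The one genuinely delicate step is the special operation: one must verify that after commuting each $b_I$ and each $b_{L_t}$ past $b_J$, the extra contributions --- the $f_{IJ}$ and $f_{L_t J}$ pieces, each a sum of a scalar and a combination of symplectic reflections $R$ --- recombine so that the only surviving term of $E$-grading $E(g)$ is $(p+1)\str(M'g)$ and all other terms sit in $E$-grading $E(g)-1$. This is precisely what Lemma \ref{grad-1} delivers through the identities $E(Rg)=E(g)-1$ and ${\cal E}(Rg)=Z_R\cap{\cal E}(g)$, applied to those $R$ with $\omega_R(b_I,b_J)\ne 0$. The remaining ingredients --- the reordering of monomials modulo lower degree, the even/odd splitting, the well-foundedness of the ordering --- are routine. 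I would also note in passing that every step operation is manifestly $G$-equivariant, so the same reduction shows that a $\varkappa$-trace is reconstructed from $G$-invariant data on minimal elements, which is the form in which this Proposition is used to count $T_G$ and $S_G$.
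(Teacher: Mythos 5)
Your proposal is correct and follows essentially the same route as the paper: the regular step operation (\ref{lessdeg}) lowering the degree, the special step operation (\ref{so})--(\ref{step2}) lowering $E(g)$ via Lemma \ref{grad-1}, induction on the well-founded ordering of Definition \ref{ordering}, and the elimination of the odd part through Proposition \ref{oddmin} together with the identification of odd traces with odd supertraces. No gaps; this is precisely the argument the paper summarizes before stating Theorem \ref{even1} and Proposition \ref{tomin}.
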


Note that, due to $ {G} $-invariance,
the restriction of the $\varkappa$-trace on $G$ is a
{\it central function},
i.e.,
a function constant on the conjugacy classes.

Below we will prove that any central function on the set of minimal elements of $G$
can be extended to a $\varkappa$-trace on $\HH$.

\section{Ground Level Conditions}\label{anal1}
Clearly, ${\mathbb C}[G] $
is a  subalgebra of $\HH$.

It is easy to describe all $\varkappa$-traces on
$ {\mathbb C}[G] $. Every $\varkappa$-trace on $ {\mathbb C}[G] $
is completely determined by its values on
$ {G}
$
and  is a central function on $ {G} $
due to $ {G} $-invariance.
Thus, the number of $\varkappa$-traces on $\G$
is equal to the number of conjugacy classes in $ {G} $.

Since $\G \subset  \HH$, some
additional restrictions on these functions
follow from the definition (\ref{scom}) of $\varkappa$-trace
and the defining relations (\ref{rel}) for
$\HH$.  Namely, for any $g\in { {G} }$,
consider elements $c_I, c_J\in {\cal E}(g)$ such that
\be\label{eigss}
\sigmama c_I=\varkappa c_I \sigmama,\ \ \sigmama c_J=\varkappa c_J \sigmama.
\ee
Then, eqs. (\ref{scom}) and (\ref{eigss}) imply that
$$
\str    \left ( c_I c_J \sigmama
\right )=  \varkappa  \str    \left ( c_J \sigmama c_I\right )=  \str    \left ( c_J c_I
\sigmama \right ),
$$
and therefore
\be\label{mm}
\str    \left ( [ c_I, c_J] \sigmama \right )=0.
\ee

Since $[ c_I, c_J] \sigmama  \in \G $,
the conditions (\ref{mm}) single out the central functions on $\G$, which can
in principle be extended to $\varkappa$-traces on $\HH$,
and Theorem \ref{th6} states that each central function on $\G$
satisfying conditions (\ref{mm}) can indeed be
extended to a $\varkappa$-trace on $\HH$.
In \cite{KV}, the conditions (\ref{mm})  are called {\it Ground Level Conditions}.

\subsection{The solutions of Ground Level Conditions}

Ground Level Conditions (\ref{mm}) is an overdetermined system
of linear equations for the central functions on $\G$.

\begin{theorem}\label{th5}
{\it The dimension of the space of solutions of
Ground Level Conditions (\ref{mm}) is equal to the
number of conjugacy classes in $ {G} $ with $E(g)=0$.
Each central function on conjugacy classes in $ {G} $ with $E(g)=0$
can be uniquely extended to a solution of
Ground Level Conditions.
}
\end{theorem}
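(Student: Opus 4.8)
Theorem \ref{th5} asserts that the space of solutions to the Ground Level Conditions has dimension equal to the number of conjugacy classes with $E(g)=0$, with the extension being unique.

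The plan is as follows. A $\varkappa$-trace on $\G$ is a central function $\phi$ on $G$, and the Ground Level Conditions (\ref{mm}) read $\phi([c_I,c_J]g)=0$ whenever $c_I,c_J\in{\cal E}(g)$. First I would fix $g\in G$ with $E(g)=l$ and choose a basis ${\mathfrak B}_g$ adapted to $g$ so that $F_{IJ}=[b_I,b_J]$ is computed via (\ref{rel}); since $b_I,b_J\in{\cal E}(g)$ we have $[b_I,b_J]={\cal C}_{IJ}+\sum_{R}\eta_R\omega_R(b_I,b_J)R$, so (\ref{mm}) becomes a linear relation among the values $\phi(Rg)$ for $R\in\R$, together with $\phi(g)$ when ${\cal C}_{IJ}\ne 0$. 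The key structural input is Lemma \ref{grad-1}: if $\omega_R(b_I,b_J)\ne 0$ for some $c^1=b_I,c^2=b_J$ then $E(Rg)=E(g)-1$ and ${\cal E}(Rg)=Z_R\cap{\cal E}(g)$. So the Ground Level Conditions have a triangular structure with respect to the grading $E$: the equation attached to a class with $E(g)=l$ expresses a combination of $\phi$-values at level $l$ (namely $\phi(g)$ itself, arising from the ${\cal C}_{IJ}\cdot 1$ term) in terms of $\phi$-values at level $l-1$ (the $\phi(Rg)$). This immediately shows the solution space surjects onto the level-zero central functions by restriction, and that any level-zero central function extends to at most one solution.

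The main work is to show the extension \emph{exists} — i.e., that the inductively-defined values are consistent: different ways of reaching the same $\phi(g)$ (different choices of basis vectors $c_I,c_J\in{\cal E}(g)$, or different symplectic reflections $R,R'$ landing in the same conjugacy class at level $l$) must give the same answer, and the resulting function must genuinely be central and satisfy \emph{all} instances of (\ref{mm}), not just the ones used in the recursion. The cleanest route is induction on $l$: assuming the claim for all classes of level $<l$, one defines $\phi$ on level $l$ by picking, for each class, one convenient pair $c_I,c_J$ with ${\cal C}_{IJ}=1$ (such a pair exists because ${\cal E}(g)$ carries the nondegenerate restriction $\omega|_{{\cal E}(g)}$ — this uses that $g\in Sp(2N)$ preserves $\omega$ and acts by $\varkappa$ on ${\cal E}(g)$, hence $\omega$ is nondegenerate on ${\cal E}(g)$), thereby fixing $\phi(g)$ via the already-known level-$(l-1)$ data. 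Then one must verify that the \emph{other} Ground Level equations at level $l$ are automatically satisfied. This is where I expect the real obstacle: one has to show that the map $(c_I,c_J)\mapsto \sum_R\eta_R\omega_R(c_I,c_J)\phi(Rg)$, viewed as a bilinear form on ${\cal E}(g)$, is proportional to $\omega|_{{\cal E}(g)}$ — because then $\phi([c_I,c_J]g)=\omega(c_I,c_J)(\phi(g)+\text{const})$ and a single normalization handles all pairs simultaneously. Proving that proportionality requires exploiting $G$-invariance (conjugating $g$ and using the inductive centrality of $\phi$ at level $l-1$) plus the precise identification ${\cal E}(Rg)=Z_R\cap{\cal E}(g)$ from Lemma \ref{grad-1}, which controls how the stabilizer of $g$ permutes the relevant reflections.

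Once existence and uniqueness of the extension to $\G$-solutions are established, the dimension count is immediate: the solution space is in bijection, via restriction to level-zero classes, with the space of central functions on $\{g\in G: E(g)=0\}$, whose dimension is the number of such conjugacy classes. I would present the induction on $E(g)$ as the backbone, isolate the bilinear-form-proportionality lemma as the crux, and defer the routine check that the resulting $\phi$ is central (which follows because every step in the construction, like the step operations of Section 3, is manifestly $G$-equivariant).
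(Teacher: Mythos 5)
Your skeleton coincides with the paper's: induction on the grading $E$, uniqueness of the extension from the triangular structure supplied by Lemma \ref{grad-1}, nondegeneracy of $\omega$ restricted to ${\cal E}(g)$ to pick a normalized pair, and the reduction of all conditions (\ref{mm}) attached to a fixed $g$ with $E(g)=l+1$ to the single statement that the antisymmetric form $B_g(c,c'):=\sum_{R\in{\cal R}}\eta_R\,\omega_R(c,c')\,\str (Rg)$ on ${\cal E}(g)$ is proportional to $\omega|_{{\cal E}(g)}$; this is exactly the content of eqs. (\ref{12})--(\ref{24}) in the paper's proof of Proposition \ref{state}. So you have located the crux correctly, and your dimension count at the end is the same as the paper's.

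The gap is that this crux is only announced, not proved, and the mechanism you hint at would not suffice. Invariance of $B_g$ under conjugation (i.e.\ under the centralizer of $g$ acting on ${\cal E}(g)$) cannot force proportionality to $\omega|_{{\cal E}(g)}$: the centralizer may act on ${\cal E}(g)$ trivially or reducibly, in which case the space of invariant antisymmetric forms is far larger than the line spanned by $\omega|_{{\cal E}(g)}$, so ``how the stabilizer of $g$ permutes the relevant reflections'' is not the engine of the proof. What the paper actually does at this step is a computation specific to the relations (\ref{rel}): it splits the sum over reflections according to whether $\omega_R(c_3,c_4)$ vanishes, substitutes $c_\alpha^R=c_\alpha-U^R_{\alpha 1}c_1-U^R_{\alpha 2}c_2\in Z_R\cap{\cal E}(g)={\cal E}(Rg)$ for $\alpha=3,4$, and invokes Proposition \ref{ah-ah} (namely $\str ([c,x]Rg)\equiv 0$ for $c\in{\cal E}(Rg)$, which itself combines centrality with the lower-level Ground Level Conditions assumed inductively) to show that each remainder $\str (I_R)$ in (\ref{summ}) vanishes; the off-diagonal conditions (\ref{13})--(\ref{24}) are then not checked directly but extracted from the already established identity (\ref{eq}) applied to the rotated symplectic bases $c'_i$, $c''_i$ depending on free parameters $\mu,\nu$. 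None of this detailed argument is recoverable from your sketch, so the decisive step remains unproven. A smaller slip: your claim that restriction to the level-zero classes ``immediately'' surjects is backwards --- surjectivity \emph{is} the existence question you later identify as the main work; only injectivity of the restriction is immediate from the triangularity.
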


\subsection{Proof of Theorem \ref{th5}}
\label{anal3}

Let us prove a couple of simple statements we will use below.

\begin{proposition}\label{ah-ah0}
{\it
Let $h\in G$, $c\in {\cal E}(h)$,
$x\in {\mathfrak B}_h$, $h(x)=\lambda x$, where $\lambda \ne \varkappa$.
Then, for any central function $f$ on ${\mathbb C}[G]$, we have
\be
f ([c, x]h)\equiv 0.
\ee
}
\end{proposition}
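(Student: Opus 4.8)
The plan is to exploit the fact that $[c,x]$, although written in terms of the vectors $c,x\in V$, actually lies in the group algebra $\G$: by the defining relation (\ref{rel}) with $t=1$ one has $[c,x]=\omega(c,x)+\sum_{R\in\R}\eta_R\,\omega_R(c,x)\,R$. Hence $[c,x]h\in\G$, and it makes sense to apply the central function $f$ to it. The whole argument is then a two-line computation using how conjugation by $h$ acts on $c$ and $x$ inside $\HH$.

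First I would record these conjugation actions. From the relation $g x=g(x)g$ we get, inside $\HH$, that $h c h^{-1}=h(c)=\varkappa c$ (since $c\in{\cal E}(h)=Ker(h-\varkappa)$) and $h x h^{-1}=h(x)=\lambda x$. Therefore
\be\nonumber
h\,[c,x]\,h^{-1}=[\,hch^{-1},\,hxh^{-1}\,]=[\,\varkappa c,\,\lambda x\,]=\varkappa\lambda\,[c,x],
\ee
which rearranges (using $\varkappa^{-1}=\varkappa$) to the identity $[c,x]\,h=\varkappa\lambda^{-1}\,h\,[c,x]$, an equality of elements of $\G$.

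Now I would apply $f$. Since $f$ is a central (hence linear) function on $\G$, we have $f\big(h\,[c,x]\big)=f\big([c,x]\,h\big)$, so the displayed identity yields $f([c,x]h)=\varkappa\lambda^{-1}\,f([c,x]h)$, i.e. $\big(1-\varkappa\lambda^{-1}\big)f([c,x]h)=0$. Because $\lambda\neq\varkappa$ we have $\varkappa\lambda^{-1}\neq 1$, and therefore $f([c,x]h)=0$, as claimed. The computation is short; the only point requiring care is that $f$ is a priori defined only on $\G$, so one must keep $[c,x]$ packaged as a single element of the group algebra (via (\ref{rel})) rather than treating $cxh$ and $xch$ separately. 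There is no substantive obstacle beyond this bookkeeping.
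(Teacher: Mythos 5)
Your proof is correct and follows essentially the same route as the paper: both arguments use the centrality of $f$ (i.e.\ $f(h A h^{-1})=f(A)$, equivalently $f(hA)=f(Ah)$ on $\G$) together with $h(c)=\varkappa c$, $h(x)=\lambda x$ to produce a scalar factor ($\varkappa\lambda$ in the paper, $\varkappa\lambda^{-1}$ in yours) that differs from $1$ exactly when $\lambda\ne\varkappa$. Your explicit remark that $[c,x]h$ lies in $\G$ by relation (\ref{rel}) is a point the paper leaves implicit, but the substance is identical.
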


\noindent
\begin{proof}Since $f$ is a central function, we have
$f ([c, x]h)=f (h[c, x]hh^{-1})=
f([h(c),h(x)]h)=\varkappa \lambda f ([c, x]h)$.
\end{proof}

\begin{proposition}\label{ah-ah}
{\it
Let $h\in G$, $c\in {\cal E}(h)$,
and Ground Level Conditions (\ref{mm}) be satisfied.
Then
\be\label{ah-ah1}
\str ([c, x]h)\equiv 0
\text{ for any } x\in V.
\ee
}
\end{proposition}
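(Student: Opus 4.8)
The plan is to reduce the general statement to the special case already handled by the Ground Level Conditions, namely the case in which $x$ lies in the $\varkappa$-eigenspace ${\cal E}(h)$. First I would decompose $x\in V$ in the eigenbasis ${\mathfrak B}_h=\{b_1,\dots,b_{2N}\}$ of $h$, writing $x=\sum_I b_I x^I$, so that by linearity it suffices to prove $\str([c,b_I]h)\equiv 0$ for each basis vector $b_I$ with $h(b_I)=\lambda_I b_I$. Here $c\in{\cal E}(h)$ is fixed, so $hc=\varkappa c h$.

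The argument then splits into two cases according to the eigenvalue $\lambda_I$. If $\lambda_I\ne\varkappa$, then $b_I$ is a ``regular'' direction for $h$, and I would run the computation already used in Proposition \ref{ah-ah0} and in the regular step operation: from $\str(cb_Ih)=\varkappa\str(b_Ihc)=\varkappa\lambda_I\str(b_Ic{}h)$ together with $\str(cb_Ih)=\varkappa\str(b_Ih c)$ one extracts, via $hb_I=\lambda_I b_I h$ and $hc=\varkappa ch$, a relation of the form $(1-\varkappa\lambda_I)\str([c,b_I]h)=0$; since $\varkappa\lambda_I\ne 1$ this forces $\str([c,b_I]h)=0$. (This uses only the defining $\varkappa$-trace property \eqref{scom}, not the Ground Level Conditions.) If instead $\lambda_I=\varkappa$, then $b_I\in{\cal E}(h)$, and now both $c$ and $b_I$ lie in ${\cal E}(h)$, so $[c,b_I]h$ is exactly an element of $\G$ of the type appearing in \eqref{mm}; hence $\str([c,b_I]h)=0$ is precisely a Ground Level Condition, which is assumed to hold. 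Combining the two cases over all $I$ and summing against the coefficients $x^I$ yields \eqref{ah-ah1}.

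The only subtlety I anticipate is bookkeeping: one must be careful that in the regular case the manipulation $\str(cb_Ih)=\varkappa\str(b_Icg)$ — i.e. moving $b_I$ past $h$ and back — is carried out using $h(b_I)=\lambda_I b_I$ correctly, and that the parities are tracked so that the $\varkappa$ factors come out as written (note $[c,b_I]h\in\G$ is even, and $c,b_I$ are odd, consistent with \eqref{scom}). There is no genuine obstacle here: the statement is essentially the observation that the two ``obvious'' sources of vanishing — the eigenvalue trick for regular directions and the Ground Level Conditions for special directions — between them cover every direction in $V$. The decomposition $V={\cal E}(h)\oplus(\text{sum of other eigenspaces})$ is exactly the dichotomy that makes this work, and it is legitimate because, by item \ref{Jord} of Proposition \ref{collect}, $h$ is diagonalizable.
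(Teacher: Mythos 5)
Your overall strategy is the paper's own: decompose $x$ in the eigenbasis of $h$, kill the components with $\lambda_I\ne\varkappa$ by an eigenvalue argument, and invoke the Ground Level Conditions (\ref{mm}) for the component lying in ${\cal E}(h)$. The problem is your justification of the regular case. You derive $\str([c,b_I]h)=0$ for $\lambda_I\ne\varkappa$ by cycling $c$ and $b_I$ inside the $\varkappa$-trace, i.e.\ by applying (\ref{scom}) to products such as $c\,b_I h$ and $b_I h\,c$. But in the context where this Proposition is used (inside the proof of Theorem \ref{th5}), $\str$ is only a central function on the group algebra $\G$ subject to (\ref{mm}); it is not yet known to extend to a $\varkappa$-trace on $\HH$ --- that extension is exactly what Theorems \ref{th5} and \ref{th6} are in the process of establishing. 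The quantities $\str(c\,b_I h)$ and $\str(b_I h\,c)$ are therefore not even defined (their arguments are odd, degree-one elements of $\HH$ lying outside $\G$), and assuming cyclicity for them would make the argument circular; only $\str([c,b_I]h)$ itself makes sense, because $[c,b_I]\in\G$ by the defining relation (\ref{rel}).

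The repair is precisely Proposition \ref{ah-ah0}, which you cite as the intended tool but then do not actually use in the computation you write out: for a central function one conjugates by $h$ entirely inside $\G$, namely $\str([c,b_I]h)=\str\bigl(h\,[c,b_I]h\,h^{-1}\bigr)=\str([h(c),h(b_I)]h)=\varkappa\lambda_I\,\str([c,b_I]h)$, and since $\varkappa\lambda_I\ne1$ when $\lambda_I\ne\varkappa$ (recall $\varkappa^2=1$) this forces the value to vanish, with no appeal to the $\varkappa$-trace property outside $\G$. With that substitution your argument coincides with the paper's proof: the $\varkappa$-eigencomponent is disposed of verbatim by (\ref{mm}), and the decomposition of $x$ is legitimate by item \ref{Jord} of Proposition \ref{collect}, as you note.
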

\begin{proof} Let $x=\sum_{\lambda\ne \varkappa}x_\lambda +x_\varkappa$,
where $h(x_\lambda)=\lambda x_\lambda$.
Since $\str $ is a central function,
Proposition \ref{ah-ah0} gives
$\str ([c, x]h)=\str ([c, x_\varkappa]h)$,
and eq. (\ref{mm}) gives $\str ([c, x_\varkappa]h)\equiv 0$.
\end{proof}

We prove Theorem \ref{th5} by induction on $E(g)$.

The first step is simple: if $E(g)=0$, then $\str (g)$ is an arbitrary
central function.
The next step is also simple: if $E(g)=1$, then there exists
a pair of elements $c_1,c_2\in {\cal E}(g)$
such that $\omega(c_1, c_2)\ne 0$.
Since $([c_1,\, c_2]-\omega(c_1, c_2))g\in \G$ and
$E(([c_1,\, c_2]-\omega(c_1,c_2))g)=0$ due to Lemma \ref{grad-1}, then
\be\label{sol}
\str (g)=-\frac 1 {\omega(c_1, c_2)} \str (([c_1,\, c_2]-\omega(c_1, c_2))g)
\ee
is the only possible value of $\str (g)$ for any $g\in G$ with $E(g)=1$.
Clearly, the right-hand side of eq. (\ref{sol}) does not depend on the
choice of basis vectors $c_1,c_2$ in ${\cal E}(g)$.

Suppose that the Ground Level Conditions (\ref{mm})
considered for all $g$ with $E(g)\leqslant l$
and for all $c_I, \, c_J \in {\cal E}(g)$
have $Q_l$ independent solutions.

\begin{proposition} \label{state}
{\it The value $Q_l$ does not depend on $l$.}
\end{proposition}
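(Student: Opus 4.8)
The plan is to show $Q_{l+1}=Q_l$ by proving two inequalities. One direction, $Q_{l+1}\leqslant Q_l$, is immediate: imposing Ground Level Conditions for all $g$ with $E(g)\leqslant l+1$ is more restrictive than imposing them only for $E(g)\leqslant l$, so a solution of the larger system restricts to a solution of the smaller one, and this restriction map is injective on the space of solutions since a $\varkappa$-trace candidate is a central function determined by its values on all conjugacy classes (the induction is really about how much freedom remains after the constraints). So the real content is the reverse inequality: every solution of the level-$\leqslant l$ system extends (uniquely) to a solution of the level-$\leqslant(l+1)$ system. Concretely, given a central function $\str$ defined consistently for all $g$ with $E(g)\leqslant l$, I must (a) produce a well-defined value $\str(g)$ for each $g$ with $E(g)=l+1$, and (b) check that the newly assigned values satisfy all Ground Level Conditions (\ref{mm}) with $c_I,c_J\in{\cal E}(g)$, $E(g)=l+1$.

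For (a), fix $g$ with $E(g)=l+1\geqslant 2$. Since $\dim {\cal E}(g)=2(l+1)\geqslant 2$ and $\omega$ restricted to ${\cal E}(g)$ is — by item \ref{even} of Proposition \ref{collect} together with the structure seen in Lemma \ref{grad-1} — non-degenerate enough to find $c_1,c_2\in{\cal E}(g)$ with $\omega(c_1,c_2)\neq 0$. By Lemma \ref{grad-1} (applied via $\omega_R$, or directly: $E(f_{c_1c_2}g)=E(g)-1=l$), the element $([c_1,c_2]-\omega(c_1,c_2))g$ lies in the span of group elements of $E$-degree $\leqslant l$, so $\str$ is already defined on it, and I set
\be
\str(g):=-\frac{1}{\omega(c_1,c_2)}\,\str\big(([c_1,c_2]-\omega(c_1,c_2))g\big),
\ee
exactly as in eq. (\ref{sol}). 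The first obstacle is to show this is independent of the choice of the pair $(c_1,c_2)$: I would argue that for a second pair $(c_1',c_2')$ the two candidate values differ by something of the form $\str([c,x]h)$ or by a combination controlled by Propositions \ref{ah-ah0} and \ref{ah-ah} applied in ${\cal E}(g)$ and its images, hence vanish; equivalently, one reduces both choices to a common refinement inside the $2(l+1)$-dimensional space ${\cal E}(g)$ by a sequence of symplectic changes of basis, each step harmless by the level-$\leqslant l$ Ground Level Conditions. Extending $\str$ this way for every $g$ with $E(g)=l+1$, and noting the construction is manifestly $G$-equivariant, gives a central function.

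For (b) — and this is where I expect the genuine difficulty — I must verify $\str([c_I,c_J]g)=0$ for arbitrary $c_I,c_J\in{\cal E}(g)$ with $E(g)=l+1$, not merely for the privileged pair used to define $\str(g)$. Here $[c_I,c_J]g = ([c_I,c_J]-\omega(c_I,c_J))g + \omega(c_I,c_J)\,g$, the first summand sits at level $\leqslant l$ where the constraint is known, so the claim reduces to checking that the level-$\leqslant l$ value of $\str(([c_I,c_J]-\omega(c_I,c_J))g)$ equals $-\omega(c_I,c_J)\str(g)$ for \emph{every} pair — i.e. that the defining formula (\ref{sol}) is forced for all pairs simultaneously. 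The strategy is to pick a Darboux-type basis $c_1,\dots,c_{2(l+1)}$ of ${\cal E}(g)$ with $\omega(c_{2a-1},c_{2a})=1$ and all other pairings zero, express arbitrary $c_I,c_J$ in this basis, and use bilinearity together with: the case $\omega(c_I,c_J)\neq 0$ (handled by the consistency of (a)), and the case $\omega(c_I,c_J)=0$ — for which one writes $c_I\wedge c_J$ in terms of the symplectic basis and applies Lemma \ref{grad-1}/Proposition \ref{ah-ah} to push the computation down to level $\leqslant l-1$, invoking the inductive hypothesis that the level-$\leqslant l$ solution already satisfies all lower Ground Level Conditions. The bookkeeping of which commutators land at which level, and the repeated appeal to Lemma \ref{grad-1} to guarantee the $E$-degree drops, is the technical heart; once it closes, uniqueness of the extension is built in (the value $\str(g)$ was forced), so the extension map is a bijection between the two solution spaces and $Q_{l+1}=Q_l$.
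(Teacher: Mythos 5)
Your overall skeleton coincides with the paper's: induct on $l$, use eq. (\ref{sol}) with one pair $c_1,c_2\in{\cal E}(g)$, $\omega(c_1,c_2)\ne 0$, to force the value $\str(g)$ at level $l+1$, then show that all remaining Ground Level Conditions for that $g$ are consequences. But the two statements on which everything hinges are only asserted, not proved, and the routes you sketch for them do not suffice. First, independence of the choice of pair is exactly the equivalence $\str(([c_1,c_2]-1)g)\equiv\str(([c_3,c_4]-1)g)$ (eq. (\ref{eq})), and it does not follow from Propositions \ref{ah-ah0} and \ref{ah-ah} alone, nor from "a sequence of symplectic changes of basis". The paper has to expand $[c_1,c_2]-1=\sum_R\eta_R\omega_R(c_1,c_2)R$ and split the sum into $A_{12}$ (reflections with $\omega_R(c_3,c_4)=0$) and $B_{12}$ (the rest), introduce for each $R$ the corrected vectors $c_3^R,c_4^R\in Z_R\cap{\cal E}(Rg)$ via the matrix $U^R$, apply Proposition \ref{ah-ah} at the element $Rg$ (legitimate only because $E(Rg)=l$, so the inductive hypothesis supplies the Ground Level Conditions there — a point you do not track when you invoke these propositions "in ${\cal E}(g)$ and its images"), use the commutation identity of Proposition \ref{???}, and finally check that each cross term $I_R$ in eq. (\ref{summ}) has vanishing $\varkappa$-trace. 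None of this bookkeeping is a formality; it is the proof.

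Second, your treatment of the orthogonal pairs, i.e.\ of $\str([c_1,c_3]g)=0$ and its companions (eqs. (\ref{13})--(\ref{24})), is not workable as described: bilinearity plus the non-degenerate-pair case does not reduce these conditions to anything already known, because $[c_1,c_3]g$ is not a linear combination of elements of the form $([c_{2a-1},c_{2a}]-1)g$, and "pushing down to level $\leqslant l-1$ via Lemma \ref{grad-1}" gives no identity forcing the vanishing. The paper instead derives them from (\ref{eq}) itself by a rotation trick: it applies the already-proved equivalence to the rotated bases $c_i'$ and $c_i''$ built from $c_1,\dots,c_4$ with free parameters $\mu,\nu$, and the arbitrariness of $\mu/\nu$ in eq. (\ref{eq1}) isolates each mixed term $\str([c_1,c_4]g)$, $\str([c_2,c_3]g)$, etc., and forces it to vanish. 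Without supplying proofs of these two points your argument establishes only the easy inequality $Q_{l+1}\leqslant Q_l$ (uniqueness of the forced value), not the extension statement that is the content of Proposition \ref{state}.
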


\indent\begin{proof}
It was shown above that $Q_1=Q_0$. Let $l\geqslant 1$.

Suppose that $Q_k$ does not depend on $k$ for $k\leqslant l$.
Consider $g\in  {G} $ with $E(g)=l+1$.
Let $c_I\in {\cal E}(g)$, where $I=1,2,...,2E(g) $, be
a basis in ${\cal E}(g)$ such that the symplectic form
${\cal C}_{IJ}=\omega(c_I,c_J)$
has a normal shape:
\begin{equation*}
{\cal C} =\left(
\begin{array}{ccccccc}
0 & 1 & 0 & 0 & \dots & 0 & 0 \\
-1 & 0 & 0 & 0 & \dots & 0 & 0 \\
0 & 0 & 0 & 1 & \dots & 0 & 0 \\
0 & 0 & -1 & 0 & \dots & 0 & 0 \\
\vdots & \vdots & \vdots & \vdots & \ddots & \vdots & \vdots \\
0 & 0 & 0 & 0 & \dots & 0 & 1 \\
0 & 0 & 0 & 0 & \dots & -1 & 0%
\end{array}%
\right).
\end{equation*}

We will show that for a fixed $g\in G$, all the Ground Level Conditions
\be\label{all}
{\cal C}_{IJ} \str (g) = -\str (([c_I,\, c_J]-{\cal C}_{IJ})g) \ \mbox{ for }\
I,J=1,\,\dots\,,\,2E(g)
\ee
follow from the  inductive hypothesis and just one of them, e.g.,
\be
\str (g) = -\str (([c_1,\, c_2]- 1)g).
\ee

For this purpose, it clearly suffices to consider eq. (\ref{all})
only for $I,J=1,...,4$:
\bee
\str (g) & = & -\str (([c_1,\, c_2]- 1)g),\label{12}\\
\str (g) & = & -\str (([c_3,\, c_4]- 1)g),\label{34}\\
0 & = & -\str ([c_1,\, c_3]g),\label{13}\\
0 & = & -\str ([c_1,\, c_4]g),\label{14}\\
0 & = & -\str ([c_2,\, c_3]g),\label{23}\\
0 & = & -\str ([c_2,\, c_4]g).\label{24}
\eee

Below we prove that eqs. (\ref{12}) and (\ref{34}) are equivalent,
namely
\be\label{eq}
\str (([c_1,\, c_2]- 1)g) \equiv \str (([c_3,\, c_4]- 1)g)
\ee
and eqs. (\ref{13}) -- (\ref{24}) follow from eq. (\ref{eq}).

Note that due to the inductive hypothesis
both sides of eq. (\ref{eq}) are well defined,
because
$$
E(([c_1,\, c_2]- 1)g)=E(([c_3,\, c_4]- 1)g)=l.
$$

Represent the left-hand side of eq. (\ref{eq}) as follows:
\bee
\str (([c_1,\, c_2]- 1)g) &=&\str ( A_{12}) + \str ( B_{12}), \text{ where }\\
A_{12} &:=& \sum_{R\in \R: \ \omega_R(c_3,\, c_4)=0}\eta_R \omega_R(c_1,\, c_2) Rg,
\label{A12}
\\
B_{12} &:=& \sum_{R\in \R: \ \omega_R(c_3,\, c_4)\ne 0}\eta_R \omega_R(c_1,\, c_2) Rg.
\label{B12}
\eee

Analogously,
\bee
\str (([c_3,\, c_4]- 1)g) &=& \str ( A_{34})+\str ( B_{34}), \text{ where }\\
A_{34} &:=& \sum_{R\in \R: \ \omega_R(c_1,\, c_2)=0}\eta_R \omega_R(c_3,\, c_4) Rg,
\label{A34}
\\
B_{34} &:=& \sum_{R\in \R: \ \omega_R(c_1,\, c_2)\ne 0}\eta_R \omega_R(c_3,\, c_4) Rg.
\label{B34}
\eee

It is clear from eqs. (\ref{A12}) -- (\ref{B34}) and Lemma \ref{grad-1} that
$$
E(A_{12})=E(B_{12})=E(A_{34})=E(B_{34})=l.
$$

Consider $R\in \R$ such that $\omega_R(c_1,c_2)\ne 0$.
Then there exists a
$2\times 2$ matrix $(U^R_{\alpha i})$, where $\alpha = 3,4$ and $i=1,2$,
such that
\bee\label{trans1}
 c_3^R &:=&c_3 - U^R_{31} c_1- U^R_{32} c_2 \in Z_R,
\\\label{trans2}
 c_4^R &:=&c_4 - U^R_{41} c_1- U^R_{42} c_2 \in Z_R.
\eee
This matrix defines the decomposition of ${c_3}_{V_R},{c_4}_{V_R}$
 in $V_R$ with respect to the basis ${c_1}_{V_R}, {c_2}_{V_R}$.

Clearly, due to Lemma \ref{grad-1} we have
\be
c_3^R,\, c_4^R\in {\cal E}(Rg).
\ee

If $\omega_R(c_3,c_4)\ne 0$, then $\det U^R\ne 0$.

If $\omega_R(c_3,c_4) = 0$, then $\det U^R = 0$.
Since
$$
\omega(c_3^R,\, c_4^R)=\omega(c_3,\,c_4)+\det U^R\omega(c_1,\,c_2)=1+\det U^R,
$$
it follows that if $\det U^R=0$, then $\omega(c_3^R,\, c_4^R)=1$.
So, if $\omega_R(c_3,c_4) = 0$ and $\omega_R(c_1,c_2) \ne 0$, then
\be\label{Rg}
\str (Rg)=-\str (([c_3^R,\, c_4^R]-1)Rg),
\ee
and
\be
E (([c_3^R,\, c_4^R]-1)Rg)=l-1.
\ee

Now, let us express $\str (A_{12})$
by means of the $\varkappa$-trace of elements of $G$ with grading $l-1$:
\bee\label{72}
\str (A_{12})&=&\sum_{R\in \R: \ \omega_R(c_3,\, c_4)=0}\eta_R \omega_R(c_1,\, c_2) \str (Rg)
=\nn
&=& - \sum_{R\in \R: \ \omega_R(c_3,\, c_4)=0}\eta_R \omega_R(c_1,\, c_2)
\str (([c_3^R,\, c_4^R]-1) Rg).
\eee

Since
\be
\str ([c_3^R, x]Rg) \equiv \str ([c_4^R, x]Rg) \equiv 0 \text{ for any } x\in V
\ee
due to Proposition \ref{ah-ah},
and since $\det U^R=0$ for any summand in eq. (\ref{72}),
we have
\be
\str (([c_3^R,\, c_4^R]-1) Rg)=\str (([c_3,\, c_4]-1) Rg),
\ee
and as a result
\bee\label{74}
\str (A_{12})= - \sum_{R\in \R: \ \omega_R(c_3,\, c_4)=0}\eta_R \omega_R(c_1,\, c_2)
\str (([c_3,\, c_4]-1) Rg)=
-\str (([c_3,\, c_4]-1)A_{12}).
\eee

As
$A_{12}=([c_1,\,c_2]-1)g - B_{12}$,
eq. (\ref{74}) gives
\bee
\str (A_{12}+B_{12})&=& \str (-([c_3,\, c_4]-1)(([c_1,\,c_2]-1)g - B_{12})+B_{12})=\nn
&=&
\str (-([c_3,\, c_4]-1)([c_1,\,c_2]-1)g + [c_3,\, c_4] B_{12}).
\label{1234}
\eee

Analogously,
\be\label{3412}
\str (A_{34}+B_{34})
=
\str (-([c_1,\, c_2]-1)([c_3,\,c_4]-1)g + [c_1,\, c_2] B_{34}).
\ee

\begin{proposition}\label{???}
{\it
Let $g\in G$, $c_1,c_2 \in {\cal E}(g)$, $\omega(c_1,c_2)=1$, and
$f$ a central function on ${\mathbb C}[G]$. Then
\be
f(([c_1,\, c_2]-1)([c_3,\,c_4]-1)g) = f(([c_3,\, c_4]-1)([c_1,\,c_2]-1)g).
\ee
}
\end{proposition}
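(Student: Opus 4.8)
The plan is to notice that, although the statement is phrased inside $\HH$, both factors standing in front of $g$ already lie in the group algebra $\G$, and that the first of them commutes with $g$ there; centrality of $f$ then closes the argument. Concretely, put $A:=[c_1,c_2]-1$ and $B:=[c_3,c_4]-1$. By the defining relation (\ref{rel}) with $t=1$, together with $\omega(c_1,c_2)=1$, we get $A=\sum_{R\in\R}\eta_R\,\omega_R(c_1,c_2)\,R\in\G$, and likewise $B=(\omega(c_3,c_4)-1)\cdot 1+\sum_{R\in\R}\eta_R\,\omega_R(c_3,c_4)\,R\in\G$ for any $c_3,c_4\in V$. Hence $ABg$ and $BAg$ are honest elements of $\G$, and we may use that a central function on $\G$ is cyclic: $f(uv)=f(vu)$ for all $u,v\in\G$, since $f(g_1g_2)=f\big(g_1^{-1}(g_1g_2)g_1\big)=f(g_2g_1)$ on group elements and this extends by linearity.

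The key step is the claim $gAg^{-1}=A$ in $\G$. Conjugation $R\mapsto gRg^{-1}$ maps $\R$ bijectively onto itself and preserves $G$-conjugacy classes, so $\eta_{gRg^{-1}}=\eta_R$; moreover, since $g\in Sp(2N)$ carries the decomposition $V=V_R\oplus^\bot Z_R$ onto $V=V_{gRg^{-1}}\oplus^\bot Z_{gRg^{-1}}$, one has $\omega_{gRg^{-1}}(gx,gy)=\omega_R(x,y)$ for all $x,y\in V$. Taking $x=c_1,\ y=c_2$ and using $g(c_i)=\varkappa c_i$ (because $c_i\in{\cal E}(g)$) yields $\omega_{gRg^{-1}}(c_1,c_2)=\varkappa^2\omega_R(c_1,c_2)=\omega_R(c_1,c_2)$. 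Substituting into $gAg^{-1}=\sum_{R}\eta_R\,\omega_R(c_1,c_2)\,gRg^{-1}$ and reindexing the sum by $R'=gRg^{-1}$ returns $A$.

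With this, the computation is one line:
\[
f(ABg)=f\big(A\cdot Bg\big)=f\big(Bg\cdot A\big)=f\big(B\,(gA)\big)=f\big(B\,(Ag)\big)=f(BAg),
\]
where the second equality is cyclicity of $f$ and the fourth is the commutation claim. Note that $B$ never gets moved, so the argument in fact requires only $c_1,c_2\in{\cal E}(g)$, the hypothesis $\omega(c_1,c_2)=1$ serving merely to display $A$ as a pure combination of reflections. The one place that deserves care is the commutation identity $gAg^{-1}=A$, where one must track how symplectic conjugation acts on the geometric data $(V_R,Z_R,\eta_R)$ attached to a reflection; but each ingredient — the $G$-invariance of $\eta$, item \ref{6} of Proposition \ref{collect}, and the orthogonal splitting $V=V_R\oplus^\bot Z_R$ — is already available, so I foresee no real obstacle.
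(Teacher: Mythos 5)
Your proof is correct and follows essentially the same route as the paper: expand $[c_1,\,c_2]-1$ via relation (\ref{rel}) into a combination of reflections lying in $\mathbb C[G]$, use cyclicity of the central function $f$ to move it past $([c_3,\,c_4]-1)g$, and then commute it with $g$. The only difference is the justification of that commutation: the paper simply observes $gc_i=\varkappa c_i g$ for $c_i\in{\cal E}(g)$, hence $g[c_1,\,c_2]=\varkappa^2[c_1,\,c_2]g=[c_1,\,c_2]g$, whereas your conjugation argument tracking $(\R,\eta_R,\omega_R)$ is a longer but equally valid derivation of the same fact.
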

\begin{proof}
Since $[c_1,\, c_2]-1 = \sum_{R\in \R}\eta_R \omega_R (c_1,c_2)R$,
we have
\bee
f(([c_1,\, c_2]-1)([c_3,\,c_4]-1)g)& = &\sum_{R\in\R}\eta_R \omega_R (c_1,c_2) f((R[c_3,\, c_4]-1)g)=\nn
&=& \sum_{R\in\R}f(\eta_R \omega_R (c_1,c_2)([c_3,\, c_4]-1)gR)= \nn
&=& f(([c_3,\, c_4]-1)g([c_1,\,c_2]-1)). \nonumber
\eee
Clearly, $g([c_1,\,c_2]-1)=([c_1,\,c_2]-1)g$ because $c_1,c_2 \in {\cal E}(g)$ and $\varkappa^2=1$.
\end{proof}

Due to Proposition \ref{???},
eqs. (\ref{1234}) and (\ref{3412}) imply
\be
\str ((A_{12}+B_{12})-(A_{34}+B_{34}))=
\str ( [c_3,\, c_4] B_{12} - [c_1,\, c_2] B_{34})
\ee
or
\bee
&&\str ((A_{12}+B_{12})-(A_{34}+B_{34}))=\nn
\label{summ}
&&\ =
\str (
\sum_{R\in \R: \ \omega_R(c_1,\, c_2)\ne 0,\ \omega_R(c_3,\, c_4)\ne 0}\eta_R
 ([c_3,\, c_4] \omega_R (c_1,c_2) - [c_1,\, c_2] \omega_R (c_3,c_4))Rg).
\eee
Consider one summand in eq. (\ref{summ})
\be\label{434}
I_R := ([c_3,\, c_4] \omega_R (c_1,c_2) - [c_1,\, c_2] \omega_R (c_3,c_4))Rg.
\ee
Rewrite $I_R$ using transformation defined in eqs. (\ref{trans1}) -- (\ref{trans2}):
\bee
 c_3 &=&c_3^R + U^R_{31} c_1+ U^R_{32} c_2, \quad \text{where } c_3^R\in Z_R,
\\
 c_4 &=&c_4^R + U^R_{41} c_1+ U^R_{42} c_2, \quad \text{where } c_4^R\in Z_R.
\eee
Note that now $\det U^R\ne 0$ since $\omega_R (c_3,c_4)\ne 0$.

Express all the terms in the right-hand side of eq.(\ref{434})
by means of $c_1$, $c_2$, $c_3^R$ and $c_4^R$:
\bee\label{90}
[c_3,\, c_4] \omega_R (c_1,c_2)Rg &=&\nn
&=&[c_3^R,\, c_4^R] \omega_R (c_1,c_2)Rg+\nn
\label{91}
&+&[c_3^R,\, U^R_{41} c_1+ U^R_{42} c_2] \omega_R (c_1,c_2)Rg+\nn
\label{92}
&+&[U^R_{31} c_1+ U^R_{32} c_2,\, c_4^R] \omega_R (c_1,c_2)Rg+\nn
\label{93}
&+&(U^R_{31}U^R_{42}-U^R_{32}U^R_{41})[c_1,\, c_2] \omega_R (c_1,c_2)Rg
\label{94}
\eee
\bee\label{95}
[c_1,\, c_2] \omega_R (c_3,c_4)Rg &=&\nn
&=&(U^R_{31}U^R_{42}-U^R_{32}U^R_{41})[c_1,\, c_2] \omega_R (c_1,c_2)Rg.
\label{96}
\eee

Since $c_3^R, c_4^R \in {\cal E}(Rg)$, Proposition \ref{ah-ah}
shows that
\bee
\str (([c_3^R,\, c_4^R] +
[c_3^R,\, U^R_{41} c_1+ U^R_{42} c_2] +
[U^R_{31} c_1+ U^R_{32} c_2,\, c_4^R] )\omega_R (c_1,c_2)Rg)\equiv 0.
\eee
So, from eqs. (\ref{90}) -- (\ref{95}) it follows that
\be
\str (I_R)\equiv 0
\ee
and
eq. (\ref{eq}) is proven.

Consider another four elements of ${\cal E}(g)$:
\bee
c_1'=\frac 1 {\sqrt 2}(\mu c_1 + \nu c_3),
\quad c_2'=\frac 1 {\sqrt 2}(\frac 1 \mu c_2 + \frac 1 \nu c_4),\nn
c_3'=\frac 1 {\sqrt 2}(\mu c_1 - \nu c_3),
\quad c_4'=\frac 1 {\sqrt 2}(\frac 1 \mu c_2 - \frac 1 \nu c_4).
\eee
Clearly, $\omega(c_i',c_j')$ is in a normal form and the relation
(\ref{eq}) holds for $c_i$ replaced by $c_i'$:
\be
\str (([c_1',\, c_2']- 1)g) \equiv \str (([c_3',\, c_4']- 1)g),
\ee
which implies, when eq. (\ref{eq}) is taken in account,
\be\label{eq1}
\frac \mu \nu \str ( [c_1,\, c_4]g) + \frac \nu \mu\str ([c_3,\, c_2]g) \equiv 0
\text { for arbitrary nonzero $\mu, \nu \in \mathbb C$}.
\ee
So
\be
\str ( [c_1,\, c_4]g) \equiv str ([c_2,\, c_3]g) \equiv 0.
\ee

Analogously, considering
\bee
&& c_1''=\frac 1 {\sqrt 2}(\mu c_1 + \nu c_4),
\quad c_2''=\frac 1 {\sqrt 2}(\frac 1 \mu c_2 - \frac 1 \nu c_3),\nn
&& c_3''=\frac 1 {\sqrt 2}(\mu c_1 - \nu c_4),
\quad c_4''=\frac 1 {\sqrt 2}(\frac 1 \mu c_2 + \frac 1 \nu c_3)
\eee
we see that
\be
\str ( [c_1,\, c_3]g) \equiv str ([c_2,\, c_4]g) \equiv 0.
\ee
This finishes the proof of Proposition \ref{state}
and Theorem \ref{th5}.
\end{proof}

\section{The number of independent $\varkappa$-traces on $ H_{1, \,\eta}(G)$}

\subsection{Main theorems}

\begin{theorem}\label{main0}
{\it The dimension of the space of $\varkappa$-traces on the
superalgebra $ H_{1, \,\eta}(G)$ is equal to the number of conjugacy
classes of elements without eigenvalue $\varkappa$ belonging to the
symplectic reflection group $ {G} \subset End(V)$.
Each central function on conjugacy classes of elements without eigenvalue $\varkappa$ belonging to the
symplectic reflection group $ {G} \subset End(V)$
can be uniquely extended to a $\varkappa$-trace on $ H_{1, \,\eta}(G)$.
}
\end{theorem}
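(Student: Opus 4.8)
The plan is to combine the reduction machinery of Section~4 with Theorems~\ref{th5} and~\ref{th6}, and then to rephrase the count in terms of conjugacy classes. First note that $E(g)=0$ holds exactly when $\varkappa$ is not an eigenvalue of $g$, and that $E$ is a class function; hence the number of conjugacy classes of $G$ with $E(g)=0$ equals the number of conjugacy classes of elements of $G$ without eigenvalue $\varkappa$. So it suffices to prove that the space of $\varkappa$-traces on $\HH$ has dimension equal to the number of conjugacy classes with $E(g)=0$ and that each class function on these classes extends uniquely to a $\varkappa$-trace.

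For the upper bound I would argue as follows. By Theorem~\ref{even1} every nonzero $\varkappa$-trace is even, and by Proposition~\ref{tomin} it is completely determined by its values on the minimal elements of $G$; of these, only the even ones, namely the $g$ with $E(g)=0$, can contribute (an even functional kills the odd minimal elements $xg$), and by $G$-invariance the restriction of a $\varkappa$-trace to $G$ is central. Therefore the map sending a $\varkappa$-trace to its restriction to $\{g\in G:\ E(g)=0\}$ is an injection into the space of class functions there, which bounds the dimension from above.

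For the reverse inequality and for uniqueness — the substantive part — I would take an arbitrary class function $f_0$ on the conjugacy classes with $E(g)=0$, extend it by Theorem~\ref{th5} uniquely to a central function $\varphi$ on $\G$ satisfying the Ground Level Conditions~(\ref{mm}), and then extend $\varphi$ to a $\varkappa$-trace on $\HH$ by Theorem~\ref{th6}; uniqueness of the latter extension is again Proposition~\ref{tomin}. Explicitly, the extension is forced by running the step operations of Section~4 backwards: (\ref{lessdeg}) expresses the $\varkappa$-trace of a regular monomial in terms of strictly lower degree and (\ref{step2}) that of a special monomial in terms of elements strictly smaller in the ordering of Definition~\ref{ordering}, and since every monomial of $\HH$ is, modulo lower-order terms, a combination of regular and special monomials, iterating reaches the minimal elements, where the value is prescribed.

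The hard part — which I expect to be the whole substance of Theorem~\ref{th6} — is well-definedness: the result of the reduction must not depend on the order in which the step operations are applied, nor on the auxiliary eigenbases ${\mathfrak B}_g$. Equivalently, the functional $T$ so constructed must satisfy~(\ref{scom}); by the splitting~(\ref{ma1})--(\ref{ma2}), the $G$-invariance condition~(\ref{ma2}) is automatic, since every step operation is $G$-equivariant and $\varphi$ is central, so the real task is to prove $\str([b_I,P(a)g]_\varkappa)=0$ for all monomials $P$, which I would do by induction along the ordering of Definition~\ref{ordering} (first in the degree of $P$, then in $E(g)$). The base case is precisely the Ground Level Conditions, guaranteed by Theorem~\ref{th5}; the inductive step compares the two ways of lowering the pair $(b_I,P(a)g)$ — commuting $b_I$ through versus reducing $P(a)g$ first — and shows that the difference lies in strictly smaller terms, using Lemmas~\ref{grad-1} and~\ref{lemma4} to control how $E$ drops under multiplication by the commutators $f_{IJ}$ with $b_I,b_J\in{\cal E}(g)$, and a Proposition~\ref{ah-ah}-type vanishing (exactly as in the proof of Theorem~\ref{th5}) to discard, in the expansion~(\ref{rel}) of $[x,y]$, the contributions of the symplectic reflections $R$ for which the relevant $\omega_R$ vanishes. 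Keeping track of which reflections genuinely contribute is the technical crux. Once both bounds are established the restriction map is a bijection, which gives the stated dimension and the unique extension; specializing $\varkappa=1$ yields the assertion about traces ($T_G$) and $\varkappa=-1$ that about supertraces ($S_G$).
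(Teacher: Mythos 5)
Your proposal is correct and follows the paper's own route: the paper proves Theorem~\ref{main0} exactly by combining Theorem~\ref{even1}, Theorem~\ref{th5}, and Theorem~\ref{th6} (with Proposition~\ref{tomin} and the observation that $E(g)=0$ precisely when $\varkappa$ is not an eigenvalue of $g$ left implicit, as you make explicit). Your sketch of how Theorem~\ref{th6} is established is only an outline of what the paper does in Section~5 and the Appendices, but since you invoke it as a quoted result this does not affect the correctness of the proof of Theorem~\ref{main0}.
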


\indent\begin{proof}
This Theorem follows from Theorem \ref{th6} (see below), Theorem \ref{even1}, and Theorem \ref{th5}.
\end{proof}

Clearly, Theorem \ref{main0} is equivalent to the following theorem.

\begin{theorem}\label{main1}
{\it Let the symplectic reflection group $ {G} \subset End(V)$
have
$T_G$ conjugacy classes without eigenvalue $1$ and $S_G$
conjugacy classes without eigenvalue $-1$.

Then the superalgebra
 $ H_{1, \,\eta}(G)$ possesses
$T_G$ independent traces and $S_G$ independent supertraces.
}
\end{theorem}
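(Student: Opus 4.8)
The plan is to deduce Theorem \ref{main1} from Theorem \ref{main0} by unwinding the definitions, so that the real content sits in the chain of results leading to Theorem \ref{main0}. First, a trace is exactly a $1$-trace and a supertrace is exactly a $(-1)$-trace in the sense of (\ref{scom}); by Theorem \ref{even1} every nonzero $\varkappa$-trace is even, and an odd supertrace is simultaneously an odd trace, hence zero, so nothing is lost by this reformulation. Next, by the definition of the grading $E$ one has $E(g)=\frac12\dim\mathrm{Ker}(g-\varkappa)$, so $E(g)=0$ precisely when $g$ has no eigenvalue equal to $\varkappa$. Therefore the conjugacy classes of $G$ ``without eigenvalue $\varkappa$'' in Theorem \ref{main0} are exactly the classes with $E(g)=0$; specializing $\varkappa=1$ counts $T_G$ of them and $\varkappa=-1$ counts $S_G$ of them, which is the assertion of Theorem \ref{main1}.

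For Theorem \ref{main0} itself the argument assembles pieces already in place. By Proposition \ref{tomin} a $\varkappa$-trace on $\HH$ is completely determined by its values on the minimal elements of $G$, i.e.\ on the $g$ with $E(g)=0$; together with Theorem \ref{even1} this makes the restriction map from $\varkappa$-traces to central functions on $\{g\in G:E(g)=0\}$ injective. The Ground Level Conditions (\ref{mm}) are necessary conditions on the restriction of any $\varkappa$-trace to $\G$, and Theorem \ref{th5} identifies the solution space of (\ref{mm}) with the central functions on conjugacy classes with $E(g)=0$, each extending uniquely to such a solution. What remains is surjectivity onto $\varkappa$-traces: every central function on $\G$ satisfying (\ref{mm}) is the restriction of an honest $\varkappa$-trace on $\HH$. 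This is exactly Theorem \ref{th6}, whose proof uses the regular and special step operations to define the candidate functional level by level in the ordering of Definition \ref{ordering} and then checks that (\ref{scom}) holds. Composing injectivity, the count from Theorem \ref{th5}, and the existence of the extension from Theorem \ref{th6} gives the stated dimension.

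The main obstacle is Theorem \ref{th6}: one must verify that the recursive prescription furnished by the step operations is consistent, i.e.\ that the value assigned to a monomial $b_{I_1}\cdots b_{I_k}g$ is independent of which regular or special step operation one applies and in what order, so that the resulting linear functional is well defined and genuinely satisfies the full $\varkappa$-commutation identity (\ref{scom}), not merely the single-generator relations (\ref{ma1}). By contrast, the passage from Theorem \ref{main0} to Theorem \ref{main1} is purely definitional, and the inductive core of Theorem \ref{th5} --- showing that all Ground Level Conditions at level $l+1$ follow from the inductive hypothesis and a single one of them --- is handled by the explicit change-of-basis computations already given in the excerpt, so no new difficulty is expected there.
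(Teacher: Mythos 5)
Your proposal is correct and follows essentially the paper's own route: Theorem \ref{main1} is obtained from Theorem \ref{main0} by the purely definitional identification of traces and supertraces with $\varkappa$-traces ($\varkappa=\pm1$) and of ``no eigenvalue $\varkappa$'' with $E(g)=0$, while Theorem \ref{main0} is assembled from Theorem \ref{even1}, Proposition \ref{tomin}, Theorem \ref{th5}, and Theorem \ref{th6} exactly as in the paper. You also correctly locate the real work in Theorem \ref{th6} (consistency of the step-operation recursion), which the paper handles via the inductive procedure and the consistency conditions proved in the Appendices.
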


\begin{theorem} \label{th6}
{\it
Every $\varkappa$-trace on the algebra $ {\G} $
satisfying the equation
\bee\label{GLC}
\str ([c_1,\,c_2] \sigmama )=0\qquad \mbox{ for any } g\in  {G}  \mbox{
with }E(g)\neq 0 \mbox{ and any } c_1,c_2 \in {\cal E}(g),
\eee
can be uniquely extended to an even $\varkappa$-trace on $ H_{1, \,\eta}(G)$.}
\end{theorem}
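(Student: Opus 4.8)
The plan is to build the extension explicitly by induction on the partial ordering of Definition~\ref{ordering}, using the regular and special step operations as the definition of the $\varkappa$-trace on higher monomials, and then to verify that this is well-defined and that it actually satisfies the defining property~(\ref{scom}). The starting data is a $\varkappa$-trace $\str$ on $\G$ satisfying the Ground Level Conditions~(\ref{GLC}); by Theorem~\ref{th5} this is the same as prescribing an arbitrary central function on the conjugacy classes with $E(g)=0$, i.e. on the even minimal elements, and (using Proposition~\ref{oddmin}, since a nonzero $\varkappa$-trace must be even by Theorem~\ref{even1}) the value on odd minimal elements is forced to be $0$. So the target functional is already pinned down on all minimal elements, and Proposition~\ref{tomin} guarantees uniqueness of any extension; the whole content of the theorem is \emph{existence}.

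First I would set up the recursion: given a monomial $b_{I_1}\cdots b_{I_k}g$, if it is regular use~(\ref{lessdeg}) to reduce the degree by $2$, and if it is special with $E(g)>0$ use~(\ref{step2}) to reduce it in the ordering; iterating, every monomial is expressed as a finite $\mathbb C$-linear combination of minimal elements, on which $\str$ is already defined. This defines a linear functional $\str$ on all of $\HH$. The first thing to check is that the value is independent of the choices made along the way — which monomial index $I_1$ is pulled out, which eigenbasis ${\mathfrak B}_g$ is used, and the order in which one resolves a sum of monomials of the same degree. I would organize this as a ``confluence'' argument: show that any two one-step reductions of the same element can be completed to a common expression, invoking Lemma~\ref{grad-1} and Lemma~\ref{lemma4} to control the gradings of the intermediate terms so the induction hypothesis applies. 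The base of the induction is exactly the statement that the Ground Level Conditions~(\ref{GLC}) are compatible with the only relations among minimal elements, i.e. Theorem~\ref{th5}; this is where the hypothesis on $\str$ is consumed.

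Next I would verify that the extended $\str$ satisfies~(\ref{scom}). By the reformulation given in the text, it is enough to check the two subsystems~(\ref{ma1}) and~(\ref{ma2}): the commutator-type equations $\str([b_I,P(a)g]_\varkappa)=0$ for all monomials $P$ and all $b_I\in{\mathfrak B}_g$, and the $G$-invariance $\str(\tau^{-1}P(a)g\tau)=\str(P(a)g)$. For~(\ref{ma2}): every step operation is manifestly $G$-equivariant and the value on minimal elements is a central function (it came from a central function on $\G$), so $G$-invariance propagates up the induction — this is the remark already made after Proposition~\ref{tomin}. For~(\ref{ma1}): the regular and special step operations were \emph{derived from} precisely these commutator relations, so one argues by induction on the ordering that if~(\ref{ma1}) holds for all elements below a given level then it holds at that level; the special step operation~(\ref{step2}) together with Lemma~\ref{lemma4} shows that the ``new'' commutator constraints at level $E(g)=l+1$ are consequences of those at level $\le l$ plus the Ground Level Conditions, which is again the mechanism of Proposition~\ref{state}.

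The main obstacle is the well-definedness / confluence step: the reduction procedure is highly non-canonical, and one must show that the two genuinely different mechanisms — lowering the polynomial degree (regular operation) and lowering $E(g)$ at fixed degree (special operation) — commute in the appropriate sense, including the subtle point that a regular reduction can produce special monomials and vice versa, and that re-expressing a monomial in a different eigenbasis ${\mathfrak B}_g$ does not change the answer. The key technical inputs that make this go through are Lemma~\ref{grad-1} (and its consequence Lemma~\ref{lemma4}), which pin down exactly how $E$ drops under multiplication by the reflection terms appearing in~(\ref{rel}), so that every auxiliary term generated in a reduction is strictly lower in the ordering and the inductive hypothesis covers it; and Theorem~\ref{th5}, which supplies the base case by showing the Ground Level Conditions already encode \emph{all} the constraints the $\varkappa$-trace must satisfy on elements of arbitrary grading, not merely grading $1$.
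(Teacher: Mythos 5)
Your reduction of the problem is accurate as far as it goes: uniqueness is indeed Proposition \ref{tomin}, and the whole content of Theorem \ref{th6} is existence, i.e.\ that the value produced by iterating the step operations (\ref{lessdeg}) and (\ref{step2}) does not depend on the choices made and does not feed new constraints back into the Ground Level Conditions. But this is exactly the point at which your proposal stops being a proof: the ``confluence argument'' is announced, not given. Lemma \ref{grad-1}, Lemma \ref{lemma4} and Theorem \ref{th5} only control how the grading $E$ drops and what happens at degree zero; they do not show that two different one-step reductions of the same element (different choice of the factor pulled around the $\varkappa$-trace, different eigenbasis ${\mathfrak B}_g$, regular versus special route) close up to a common value. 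In the paper this local consistency is the entire technical body of the proof: the relations are repackaged into generating functions $\Psi_g(\mu)=\str(e^S g)$, the commutator constraints become the overdetermined linear system (\ref{nm1}), and one must prove the consistency conditions (\ref{c1}), (\ref{c2}) (Appendix \ref{appb}: explicit weight factors, Jacobi identities, cancellation after antisymmetrization) and, for the special polynomials, the condition (\ref{c3}) (Appendix \ref{appc}), which says that the right-hand sides $R_{IJ}$ of (\ref{dm1}) transform correctly under the $\mathfrak{sp}(2E(g))$ algebra realized by the operators $L_{IJ}$; only then does (\ref{ex2}) (together with the elementary fact about solutions of $\partial_J X_I+\partial_I X_J=0$, which isolates the ground-level ambiguity $\Phi_g(0)$) uniquely determine the $\varkappa$-trace of special polynomials. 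None of this is replaced by your appeal to ``the mechanism of Proposition \ref{state}'': that proposition concerns only elements of $\mathbb C[G]$ and shows the GLC at grading $l$ determine those at grading $l+1$; the analogous statement for higher-degree polynomials is a different and substantially harder computation, and it is precisely what Appendices \ref{appb} and \ref{appc} supply.

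So the gap is concrete: you have not verified local confluence of the rewriting system, and without an argument of the strength of (\ref{c1})--(\ref{c3}) one cannot exclude that two reduction sequences for the same element differ by a nonzero linear combination of values on $\mathbb C[G]$, i.e.\ that the extension imposes conditions beyond (\ref{GLC}). A minor additional caution: your claim that the value on odd minimal elements ``is forced to be $0$'' by Proposition \ref{oddmin} is literally stated only for $\varkappa=1$; for general $\varkappa$ the vanishing on odd elements comes from the evenness statement (Theorem \ref{even1}), whose proof again runs through the same step operations, so it cannot be quoted as an independent input when you are in the middle of establishing that those operations are consistent.
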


For proof of Theorem \ref{th6}, see the rest of this section and Appendices.

The proof of Theorem \ref{th6} was published in \cite{KV} for the case
of supertraces (i.e., $\varkappa =-1$) on the superalgebra of observables of
Calogero model (i.e., $G=A_n$) and in \cite{KT} for the case $ H_{1, \,\eta}(G)$, where
the group $G$ is a finite group generated by a root system in $\mathbb R^N$.

Here we chose definitions of symbols such that
the rest of this section and Appendices
coincide almost literally with
analogous parts of \cite{KT} (and of \cite{KV}, if we change $\sigma \in S_N$ to
$g\in G\subset Sp(2N)$).

\subsection{The $\varkappa$-trace of General Elements}\label{sec5}

Proposition \ref{tomin}
does not prove Theorem \ref{th6} because the resulting values
of $\varkappa$-traces may
a priori depend on the sequence of step operations used and may
in principle impose additional constraints on
the values of $\varkappa$-trace on $\G$.

Below we prove that the value of $\varkappa$-trace does not depend
on the sequence of step operations used. We use the following inductive procedure:

{\bf $(\star)$}
{\it
Let $F\defeq P(b_{I})g\in \HH$, where $P$ is an even monomial such that $\deg P=2k$,
$b_I\in {\mathfrak B}_g$
and $g\in  {G} $.
Assuming that a $\varkappa$-trace is well defined for all elements of $\HH$ lesser
than  $F$ relative to the ordering from Definition \ref{ordering}, we prove that $\str (F)$ is defined also
without imposing any additional constraint on the solution of the Ground Level Conditions.
}

The central point of the proof is consistency conditions
(\ref{c1}), (\ref{c2}) and (\ref{c3}) proved in Appendices \ref{appb} and \ref{appc}.

   Assume that the  Ground Level Conditions hold.
   The proof of Theorem \ref{th6} will be given in a constructive way by
   the following double induction procedure, equivalent to ($\star$):                                                                     %

   {\bf (i)} Assume that
   $$
   \str \left([b_I , P_p (a) \sigmama
   ]_\varkappa \right) =0 \ \ \mbox{ for any $P_p (a),$ $\sigmama $ and $I$
   provided $b_I \in {\mathfrak B}_\sigmama$}
   $$
   and
   $$
   \begin{array}{l}
   \mbox{$\lambda (I) \neq \varkappa$; $p\leqslant k\,$  or}\\
   \mbox{$\lambda (I) =\varkappa$, $E(\sigmama  )\leqslant l$, $p\leqslant k\,$ or}\\
   \mbox{$\lambda (I) =\varkappa$; $p\leqslant k-2$}\,,
   \end{array}
   $$
   where $P_p (a)$ is an arbitrary degree $p$ polynomial in
   $a_{i} $ and $p$ is odd. This implies that there exists
   a unique extension of the $\varkappa$-trace such that the same is true for
   $l$ replaced with $l+1$.

   {\bf (ii)} Assuming that
   $\str \left( b_I  P_p (a) \sigmama - \varkappa P_p (a) \sigmama b_I \right)=0$
   for any $P_p (a)$, $\sigmama $ and
   $b_I \in {\mathfrak B}_\sigmama$,  where $p\leqslant k$,
   one proves that there exists a unique extension of the $\varkappa$-trace  such
   that the assumption {\bf (i)} is true for $k$ replaced with $k+2$ and $l=0$.

   As a result, this inductive procedure uniquely extends any solution of the
   Ground Level Conditions
   to a $\varkappa$-trace on the whole $H_{ {G} } (\eta )$.
   (Recall that the $\varkappa$-trace of any odd element of $H_{ {G} } (\eta )$ vanishes
   because the $\varkappa$-trace is even.)

It is convenient to work with the exponential generating
functions
\be
\label{gf}
\Psi_g   (\mu )=
\str \left ( e^S \sigmama  \right )\,,\mbox{ where }
S= \sum_{L=1}^{2N} (\mu^{L } b_{L} )\,,
\n
\ee
where $g $ is a fixed element of $ {G} $, $b_L \in {\mathfrak B}_g \,,$ and
$\mu^{L } \in {\mathbb C} $ are independent parameters.

The indices $I,J$ are raised and lowered with the help of the symplectic
forms $ {\cal C}^{IJ}$ and $ {\cal C}_{IJ}$ (see eq. (\ref{calc})):
\be\label{rise}
\mu_I=\sum_J{\cal C}_{IJ}\mu^J\,,\qquad \mu^I=\sum_J\mu_J {\cal C}^{JI}\,;
\qquad \sum_M{\cal C}_{IM}{\cal C}^{MJ}=-\delta_I^J\,.
\ee

By differentiating eq. (\ref{gf})
$n$ times with respect to
$\mu^{L }$ at $\mu=0$ one obtains a $\varkappa$-trace of an arbitrary polynomial
of $n$-th degree in $b_L$
as a coefficient of
$\sigmama $, up to polynomials of lesser degrees.
In these terms, the induction on the degree of
polynomials is equivalent to the induction on the homogeneity degree in
$\mu$ of the power series expansions of $\Psi_g  (\mu )$.

As a consequence of general properties of the $\varkappa$-trace,
the generating functions $\Psi_g  (\mu )$ must be $ {G} $-covariant:
\bee\label{S_N}
\Psi_{\tau g \tau^{-1}}(\mu)=\Psi_g
(\tilde{\mu})\,,
\eee
where the $ {G} $-transformed parameters are of the form
\bee
\label{base}
\tilde{\mu}^I=\left({\mathfrak M}(\tau g \tau^{-1})
{\mathfrak M}^{-1}(\tau)\Lambda^{-1}(\tau){\mathfrak M}(\tau)
{\mathfrak M}^{-1}( g )\right)^I_J {\mu}^J
\eee
and matrices ${\mathfrak M}(g )$ and $\Lambda(g)$ are defined below by eqs.
(\ref{frm}) and (\ref{eigmat}).

Let ${\mathfrak M}(g)$ be the matrix of the map
${\mathfrak B}_{\bf 1}\longrightarrow
{\mathfrak B}_g$, such that
\bee\label{frm}
b_I=\sum_{i} {\mathfrak M}^{i}_I(g)\, a_{ i}\,.
\eee
Obviously, this map is invertible.
Using the matrix notation one can
rewrite (\ref{basis}) as
\bee\label{eigmat}
\sigmama (b_I) =\sum_{J=1}^{2N} \Lambda_I^J(g)\,
 b_J,
\eee
where the matrix $(\Lambda_I^J)$ is diagonal, namely, $\Lambda_I^J=\delta_I^J\lambda_I$.

The necessary
and sufficient conditions for the existence of an even $\varkappa$-trace are the
$ {G} $-covariance conditions (\ref{S_N}) and the condition
\be\label{start}
\str \left(
\lbrack
b_L  , e^S
\sigmama  \rbrack _\varkappa\right)=0\qquad
\mbox{for any $ g $ and $L$} \,,
\ee
or, equivalently, taking in account that linear function $\str$
is an even $\varkappa$-trace,
\be\label{start1}
\str \left(
b_L  e^ S
\sigmama  - \varkappa e^S
\sigmama b_L \right)=0\qquad
\mbox{for any $ g $ and $L$} \,.
\ee

\subsection{General relations}\label{genrel}

To transform eq. (\ref{start1}) to a form convenient for the proof,
we use the following
two general relations true for arbitrary operators $X$ and $Y$ and
 parameter $\mu \in {\mathbb C}$:
\be\label{r1}
X \exp(Y+\mu X)=\frac {\partial }{\partial \mu} \exp  (Y+\mu X )+
\int \,t_2 \, \exp (t_1 (Y+\mu X))[X,Y]  \exp (t_2 (Y+\mu X))D^1t ,
\ee
\be\label{r2}
 \exp (Y+\mu X)X=\frac {\partial }{\partial \mu} \exp  (Y+\mu X )-
\int \,t_1 \, \exp (t_1 (Y+\mu X))[X,Y]  \exp (t_2 (Y+\mu X))D^1t
\ee
with the convention that
\be\label{t}
 D^{n-1}t=\delta (t_1 +\ldots +t_n -1)\theta (t_1 )\ldots \theta (t_n )
dt_1 \ldots dt_n \,.
\ee

The relations (\ref{r1}) and (\ref{r2}) can be derived with the help of
partial integration (e.g.,  over $t_1$) and the following formula
\be\label{d}
\frac {\partial }{\partial \mu} \exp  (Y+\mu X ) =
\int \,  \exp (t_1 (Y+\mu X)) X  \exp (t_2 (Y+\mu X))D^1 t\,
\ee
which can be proven by expanding in power series.  The well-known formula
\be
\label{r3}
[X, \exp (Y)]=
\int \,  \exp (t_1 Y)[X,Y]  \exp (t_2 Y)D^1 t
\ee
is a consequence of eqs. (\ref{r1}) and (\ref{r2}).

With the help of eqs. (\ref{r1}),  (\ref{r2}) and (\ref{basis1}) one rewrites
eq. (\ref{start1}) as
\be\label{nm1}
(1-\varkappa \lambda_L )\frac {\partial }{\partial \mu^L }\Psi_g   (\mu )=
\int
\,(-\varkappa \lambda_L t_1 -t_2 ) \str \ig (
\exp (t_1 S)[b_L ,S]\,\exp (t_2 S)\sigmama \ig )\, D^1 t\,.
\ee
This condition should be true for any $\sigmama $ and $L$ and plays the central
role in the analysis in this section.
Eq. (\ref{nm1}) is an overdetermined system of linear equations
for $\str$; below we show that it has the only solution extending any fixed solution
of the Ground Level Conditions.

There are two essentially distinct cases,  $\lambda_L \neq \varkappa $ and
$\lambda_L =\varkappa $. In the latter case, the eq. (\ref{nm1}) takes the form
\be\label{m1}
0=\int \, \str \ig (
\exp (t_1 S)[b_L ,S]\,exp (t_2 S) \sigmama  \ig )D^1 t\,,\qquad \lambda_L  =\varkappa \,.
\ee

In Appendix \ref{appb} we prove by induction that eqs. (\ref{nm1})
and (\ref{m1}) are consistent in the following sense:
\bee\label{c1}
\!\!\!\!\!\!\!\!\!\!\!\!
(1 -\varkappa \lambda_K )\frac {\partial }{\partial \mu^K }\int\,(-\varkappa \lambda_L t_1 -t_2 )
\str \ig (
\exp (t_1 S)[b_L ,S]\,\exp (t_2 S) \sigmama
\ig )D^1 t -(L \leftrightarrow K ) \equiv 0 & & \\
\mbox{for }\lambda_L \neq \varkappa , \ \lambda_K  \neq  \varkappa  & & \nonumber
\eee
and
\be
\label{c2}
(1 -\varkappa \lambda_K )\frac {\partial }{\partial \mu^K }\int \, \str \ig (
\exp (t_1 S)[b_L ,S]\,\exp (t_2 S)\sigmama \ig )D^1 t \equiv 0
\mbox{ for }   \lambda_L=\varkappa .
\ee
Note that this part of the proof is quite general and does not depend on a
concrete form of the commutation relations  in eq.
(\ref{rel}).

By expanding the exponential $e^S$ in eq. (\ref{gf}) into power series in $\mu
^K$ (equivalently $b_K$) we conclude that eq. (\ref{nm1}) uniquely
reconstructs the $\varkappa$-trace of monomials containing $b_K$ with
$\lambda_K\neq \varkappa $
(i.e., {\it regular monomials}) in terms of $\varkappa$-traces of some
lower degree polynomials. Then the consistency conditions (\ref{c1}) and (\ref{c2})
guarantee that eq. (\ref{nm1}) does not impose any additional conditions on
the $\varkappa$-traces of lower degree polynomials and allow one to represent the
generating function in the form
\bee\label{ex1}
\Psi_g  &=& \Phi_g (\mu)+\\
 &+&
\sum_{L:\,\lambda_L \neq \varkappa }
\int_0^1
\frac {\mu_L d\tau} {1-\varkappa \lambda_L}\int D^1 t\,(-\varkappa \lambda_L t_1 -t_2 ) \str \ig (
e^{t_1 (\tau S^{\prime\prime}+S^\prime)}[b_L ,(\tau S^{\prime\prime}+S^\prime)]
\,e^{ t_2 (\tau S^{\prime\prime}+S^\prime)}\sigmama \ig )\, ,
\nonumber
\eee
where we
introduced the generating functions $\Phi_g $ for the $\varkappa$-trace of
{\it special polynomials}, i.e.,
the polynomials depending only on $b_L$ with
$\lambda_L=\varkappa $, i.e., $b_L \in {\cal E}(g)$:
\be\label{gff}
\Phi_g   (\mu )\defeq
\str \left ( e^{ S^\prime} \sigmama  \right ) =
 \Psi_g  (\mu)\ig |_
{(\mu^I=0 \mbox{ {\footnotesize if} } \lambda_I \neq \varkappa )}
\n
\ee
and
\be
\label{spr}
S^\prime = \sum_{L:\,b_L \in {\mathfrak B}_\sigmama ,\,\lambda_L=\varkappa}
 (\mu^{L } b_L); \qquad S^{\prime\prime}=S-S^\prime\,.
\ee
The relation (\ref{ex1}) successively expresses the $\varkappa$-trace of
higher degree regular
polynomials via the $\varkappa$-traces of lower degree polynomials.

One can see that the arguments above prove the inductive
hypotheses {\bf (i)} and {\bf (ii)} for the particular case where the
polynomials $P_p (a)$ are regular and/or $\lambda_I \neq \varkappa $.  Note that for
this case the induction {\bf (i)} on the grading $E$
 is trivial: one simply proves that the degree of the
polynomial can be increased by two.

Let us now turn to a less trivial case of the special polynomials:
\be\label{startprime}
\str \left (b_I   e^{S^\prime} \sigmama - \varkappa e^{S^\prime}\sigmama b_I  \right )=0\,,
\mbox{ where } \lambda_I =\varkappa \,.
\ee
This equation implies
\be\label{startprime1}
\str \left ([b_I,\,   e^{S^\prime}]\sigmama  \right )=0\,,
\mbox{ where } \lambda_I =\varkappa \,.
\ee

Consider the part of $\str \left ([b_I  , \exp S^\prime ] \sigmama  \right )$
which is of degree $k$ in $\mu$ and let $E(g)=l+1$.
By eq. (\ref{m1}) the conditions (\ref{startprime1}) give
\be
\label{m1prime} 0=
\int \,
\str \left( \exp (t_1 S^\prime)[b_I ,S^\prime]\, \exp (t_2 S^\prime) \sigmama
\right) D^1 t\,.  \n
\ee

Substituting $[b_I ,S^\prime]=\mu_I +  \sum_M f_{IM}\mu^M$,
where the quantities $f_{IJ}$ and $\mu_I$ are defined
in eqs. (\ref{calc}), (\ref{37}) and (\ref{rise}), one can rewrite eq. (\ref{m1prime})
in the form
\bee\label{formprime}
\mu_I \Phi_g (\mu ) &=&
- \int
\str \bigg ( \exp (t_1 S^\prime)\sum_M f_{IM}\mu^M\, \exp (t_2 S^\prime) \sigmama
\bigg) D^1 t\,.
\eee

Now we use the inductive hypothesis {\bf (i)}.
The integrand in eq. (\ref{formprime}) is a $\varkappa$-trace of
a  polynomial of degree $\leqslant k-1$  in the $a_{\alpha\,i}$ in the sector of  degree $k$
polynomials in $\mu$, and $E(f_{IM}g)=l$.
Therefore one can use the inductive hypothesis {\bf
(i)} to obtain the equality
$$
\int \str \ig (  \exp (t_1 S^\prime)\sum_M f_{IM}\mu^M\,  \exp (t_2 S^\prime)
\sigmama  \ig )D^1t =
\int \, \str \ig (   \exp (t_2 S^\prime) \exp (t_1 S^\prime)
\sum_M f_{IM}\mu^M \sigmama  \ig )D^1 t,
$$
where we used
that
$\str (S^\prime F \sigmama )$ $=$ $ \varkappa \str (F
\sigmama  S^\prime) $= $\str (F  S^\prime \sigmama )$ by definition of
$S^\prime$.

As a result, the inductive hypothesis allows one to transform eq. (\ref{startprime})
to the form:
\be
\label{p9}
X_I =0,\text{~~where~~} X_I  \defeq   \mu_I \Phi_g (\mu )
+ \str \bigg( \exp (S^\prime ) \sum_Mf_{IM}\mu^M\sigmama  \bigg) \,.
\ee

By differentiating this equation with respect to $\mu^J$ one obtains after
symmetrization
\be
\label{p10}
\frac {\partial }{\partial \mu^J}   \left(
\mu_I
\Phi_g  (\mu )\right)
+(I\leftrightarrow J )=-
\int \str \ig (e^{t_1 S^\prime } b_Je^{t_2 S^\prime }
\sum_M f_{IM}\mu^M \sigmama  \ig )D^1 t +(I\leftrightarrow J ).
\ee

An important point is that the system of equations (\ref{p10}) is equivalent
to the original equations (\ref{p9}) except for the ground level part
$\Phi_g  (0)$.
This can be easily seen from the simple fact that the general solution
of the system of equations
for
entire functions
$X_I(\mu)$
$$
\frac {\partial }{\partial \mu^J} X_I(\mu) + \frac {\partial }{\partial \mu^I} X_{J}(\mu) =0
$$
is of the form
$$X_I(\mu)=X_I(0)+\sum_{J}c_{IJ}\mu^J$$
where
$X_I(0)$ and
$c_{IJ}$=$-c_{JI}$
are some constants.

The part of eq. (\ref{p9}) linear in $\mu$ is however equivalent to the
Ground Level Conditions
analyzed in Section \ref{anal1}.
Thus, eq. (\ref{p10}) contains all information on eq. (\ref{mm})
additional to the Ground Level Conditions.
For this reason, we will from now on analyze
equation (\ref{p10}).

Using again the inductive hypothesis
we move $b_I$ to the left and to
the right of the right hand side of eq. (\ref{p10})
with weights equal to $\frac 1 2$ each to get
\bee\label{p11}
&{}&
\frac {\partial }{\partial \mu^J}\mu_I \Phi_g  (\mu )+(I\leftrightarrow J )=
-\frac 1 {2} \sum_{M}\str \ig (  \exp (S^\prime )\{b_J ,f_{IM}\}\mu^M \sigmama \ig )-\nn
&{}&
-\frac {1}{2}\int \,\sum_{L,M}(t_1 -t_2 ) \str \ig (\exp (t_1 S^\prime )
F_{JL}\mu^L \exp (t_2 S^\prime ) f_{IM}\mu^M \sigmama  \ig )D^1 t
+ (I\leftrightarrow J )   \,.
\eee
The terms with the factor  $t_1 -t_2 $
vanish as is not difficult to show, so
eq. (\ref{p11}) reduces to
\be\label{dm1}
L_{IJ}\Phi_g  (\mu )=  -\frac {1}{2}
R_{IJ} (\mu )\,,
\ee
where
\bee\label{RIJ}
R_{IJ} (\mu )=\sum_{M}
\str \ig (  \exp (S^\prime )\{b_J ,f_{IM}\}\mu^M \sigmama  \ig )
+(I\leftrightarrow J )
\eee
and
\bee\label{LIJ}
L_{IJ}=\frac {\partial }{\partial \mu^J}\mu_I + \frac {\partial }{\partial \mu^I}\mu_J\,,
\eee
or, equivalently,
\bee\
L_{IJ}=\mu_I\frac {\partial }{\partial \mu^J} + \mu_J\frac {\partial }{\partial \mu^I}\,.
\eee

The differential operators $L_{IJ}$ satisfy the standard
commutation
relations of the Lie algebra
$\mathfrak{sp}(2E(g))$
\be\label{lcom}
[L_{IJ},L_{KL}]= - \left(
{\cal C}_{IK}L_{JL}+
{\cal C}_{IL}L_{JK}+
{\cal C}_{JK}L_{IL}+
{\cal C}_{JL}L_{IK} \right)
\,.
\ee
In Appendix \ref{appc} we show by induction  that this Lie algebra  $\mathfrak{sp}(2E(g))$
realized by differential operators is consistent
with the right-hand side of the basic relation (\ref{dm1}), i.e., that
\be\label{c3}
[L_{IJ},\,R_{KL}]-
[L_{KL},\,R_{IJ}]=    -\left(
{\cal C}_{IK}R_{JL}+
{\cal C}_{JL}R_{IK}+
{\cal C}_{JK}R_{IL}+
{\cal C}_{IL}R_{JK}     \right)
\,.
\ee

Generally, these consistency conditions guarantee that
eqs. (\ref{dm1})
express $\Phi_g  (\mu ) $ in terms of $R^{IJ}$ in the following
way
\bee
\label{ex2}
\Phi_g (\mu)&=&
\Phi_g (0)+\frac  {1}{8E(g)}\sum_{I,J=1}^{2E(g)} \int_0^1
\frac{dt}{t} (1-t^{2E(\sigmama  )})
(L_{IJ} R^{IJ})(t\mu ) \,,
\eee
provided
\be
\label{0}
R^{IJ}(0)=0\,.
\ee
The latter condition must hold for a consistency of eqs. (\ref{dm1})
since its left hand side vanishes at $\mu^I =0$. In the expression (\ref{ex2})
it guarantees that the integral over $t$  converges.
In the case under consideration the condition (\ref{0})
is met as follows from definition (\ref{RIJ}).

Taking  Lemma \ref{grad-1} and the explicit form (\ref{RIJ}) of $R_{IJ}$
into account  one concludes that eq.
(\ref{ex2}) uniquely expresses the $\varkappa$-trace
of special polynomials in terms of the
$\varkappa$-traces of polynomials of lower degrees or in terms of the $\varkappa$-traces of special
polynomials of the same degree multiplied by elements of $ {G} $ with a smaller value of $E$
provided that the $\mu$-independent term $\Phi_g (0)$ is an arbitrary solution
of {the Ground Level Conditions}.  This completes the proof of Theorem \ref{th6}. $\square$

\section{Non-deformed skew product $H_{1,0}(G)$ of the Weyl superalgebra and a finite
symplectic reflection group}\label{non}

Consider $H_{1,0}(G)$. It has the same number of traces and
supertraces as $ H_{1, \,\eta}(G)$ for an arbitrary $\eta$ and the
generating functions of these traces and supertraces are written below
explicitly. The algebra $ H_{1, \,0}(G)$ is the skew product $W_N*G$
of the Weyl superalgebra $W_N$
and the group algebra $\mathbb C [G]$ of the finite group $ {G}\subset Sp(2N) $ generated by a
system $\R\subset G$ of symplectic reflections. Algebras of this type,
and their generalizations, were considered in \cite{pass}.

Because the Weyl superalgebra $W_N$ is simple, the algebras $H_{1,0}(G)=W_N*G$  are also simple
(see \cite{pass}, p. 48, Exercise 6). This is a way to augment the stock of known simple
associative (super)algebras with several (super)traces.

It is easy to find
the general solution of eqs. (\ref{all}), (\ref{nm1}) and (\ref{m1})
for the generating function
of $\varkappa$-traces in the case $\eta=0$:

(1) If $g\in  {G} $ and $E(g)\ne 0$, then $sp(P(a_i)g)=0$ for any polynomial $P$.

(2) If $g\in  {G} $ and $E(g)= 0$, then $sp(g)$ is
an arbitrary central function on $ {G} $.

(3) Let $E(g)= 0$.
Let $S=\sum_{i} \mu^{i} a_{i}$\,,
$
\Psi(g,\mu,t):=\str( e^{tS}g),\;\Psi(g,\mu)=\str ( e^{S}g )
=\Psi(g,\mu,1)$.
Then
\begin{align}\label{nu0}
\str\left(  [a_{i},e^{tS}g]_{\varkappa}\right)   &  =\str \left(
t\omega_{ij}\mu^{j}e^{tS}g - e^{tS}a_{j}g
p_{i}^{j}\right)  ,\;\mbox{ where } p_{i}^{j}=(1-\varkappa g)_{i}^{j} \,.
\end{align}

Since $E(g)=0$, the matrix $(p_{i}^{j})$ is invertible, so eq.
(\ref{nu0}) gives

\begin{align*}
&  \frac d {dt}{\Psi}(g,\mu,t)=-\mu^{j}\omega_{ij}
q_{k}^{i}
\mu^{k}\Psi(g,\mu,t),
 \mbox{ where }
  q_{k}^{i}=\left(  \frac{1}{1-\varkappa g}\right)  _{k}^{i}=\frac{1}%
{2}\left(  \frac{\varkappa + g}{\varkappa - g}\right)  _{k}^{i}+\frac{1}%
{2}\delta_{k}^{i}.
\end{align*}%
So
\begin{align*}
\frac d {dt}{\Psi}(g,\mu,t)  &  =-Q(\mu)
{\Psi}(g,\mu,t),
\mbox{ where }
Q=\frac 1 2 \mu^i \mu^j \tilde\omega_{ij},
\mbox{ and } \tilde\omega_{ij} =
\omega_{ki} \left(\frac{\varkappa + g}{\varkappa - g}\right)_j^k
\end{align*}
and finally
\be\nonumber
\Psi(g,\mu)    = \exp
\left (
-\frac{1}{2}\mu^i \mu^j \omega_{ki}\left(\frac{\varkappa + g}{\varkappa - g}\right)_j^k
\right)
\str (\sigmama).
\ee

It is easy to check that the form $\tilde\omega_{ij}$
is symmetric.

\section{Lie algebras $\HH^L$ and Lie superalgebras $\HH^S$}

We can consider the space of associative algebra $\HH$ as
a Lie algebra $\HH^L$ with the brackets%
\footnote{
Recall that $[f,g]_\varkappa := fg - \varkappa^{\pi(f)\pi(g)}gf$.}
$[f,g]_{+1}=fg-gf$ for all $f,g\in \HH^L$.

We can also consider the space of associative algebra $\HH$ as
a Lie superalgebra $\HH^S$ with the brackets
$[f,g]_{-1}=fg-(-1)^{\pi(f)\pi(g)}gf$ for all $f,g\in \HH^S$.

S. Montgomery in \cite{SM} showed that it is possible
to construct simple Lie superalgebra $A^L$ from simple
associative superalgebra $A$ if the supercenter of $A$
satisfies some conditions. In particular, if the supercenter
is $\mathbb C$, then these conditions are satisfied.

\subsection{Center and supercenter of $\HH$.}

Let $\ce$ be the center of $\HH$,
i.e.,
$fz-zf=0$ for all $z\in\ce$ and for all $f\in \HH$.

Let $\se$ be the supercenter of $\HH$,
i.e.,
$fz-(-1)^{\pi(z)\pi(f)}zf=0$ for all $z\in\se$ and for all $f\in \HH$.
Clearly, $\se=\se_0\oplus\se_1$, where $\pi(\se_0)=0$ and $\pi(\se_1)=1$.
Evidently, $\se_0\subset \ce$.

\begin{theorem}\label{center}
$\ce=\se=\mathbb C$.
\end{theorem}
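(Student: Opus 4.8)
The plan is to show $\ce = \se = \mathbb C$ by a filtration/degree argument, relying on the commutation relations (\ref{rel}) with $t=1$. First I would equip $\HH$ with the increasing filtration by polynomial degree in the elements of $V$; since $t\ne 0$, the relation $[x,y] = \omega(x,y) + \sum_R \eta_R\omega_R(x,y)R$ shows that the associated graded algebra is $\mathrm{gr}\,\HH \cong \mathbb C[V]\rtimes G$, the undeformed skew product, where $\mathbb C[V]$ is a \emph{commutative} polynomial algebra in $2N$ variables. A nonzero central (or supercentral) element $z\in\HH$ of filtration degree $d$ has a nonzero symbol $\bar z$ of degree $d$ in $\mathrm{gr}\,\HH$, and $\bar z$ must lie in the (super)center of $\mathrm{gr}\,\HH$, because the bracket on $\HH$ respects the filtration (lowering degree by $2$ in the leading Weyl part, but the leading-order commutator with a degree-$1$ element $x\in V$ is computed in $\mathrm{gr}$).

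Next I would pin down the (super)center of $\mathbb C[V]\rtimes G$. Write $z = \sum_{g\in G} P_g\, g$ with $P_g\in\mathbb C[V]$. Commuting with an arbitrary $h\in G$ gives $P_{hgh^{-1}} = h(P_g)$ for all $g$ (using $h P_g = h(P_g) h$), so the supports are unions of conjugacy classes and the polynomials are related by the $G$-action. Commuting (in the super sense, with sign $(-1)^{\pi(z)\pi(x)}$) with $x\in V$: in $\mathrm{gr}$, $x$ is an even \emph{central} element of $\mathbb C[V]$, so $x\,(P_g g) = x P_g g$ while $(P_g g)\,x = P_g\, g(x)\, g$; the (super)centrality condition forces, for each $g$ in the support, $x P_g = (-1)^{\pi(x)\pi(z)} g(x) P_g$ for all $x\in V$, i.e. the polynomial identity $x\,P_g = \pm g(x)\,P_g$ in $\mathbb C[V]$. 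Since $\mathbb C[V]$ is a domain, either $P_g = 0$ or $g(x) = \pm x$ for \emph{all} $x\in V$, i.e. $g = \pm 1$.

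So only $g = 1$ and $g = -1$ can occur in the support of $\bar z$. The element $-1\in Sp(2N)$ need not lie in $G$; but when it does, the term $P_{-1}(-1)$ contributes. For the \emph{center} ($\varkappa$-sign $+1$): $g=1$ gives $x P_1 = x P_1$ (no constraint), but then commuting $P_1$ with elements of $V$ in $\HH$ itself (not just $\mathrm{gr}$) via (\ref{rel}) forces $P_1$ to be a scalar — this is the standard fact that the center of the Weyl algebra $W_N$ is $\mathbb C$, applied degree by degree; and $g=-1$ gives $xP_{-1} = -xP_{-1}$, hence $P_{-1}=0$. For the \emph{supercenter}: if $z$ is even the same analysis (with signs $+$) gives $\bar z\in\mathbb C\cdot 1$; if $z$ is odd, the sign is $-$, so $g=1$ forces $xP_1 = -xP_1$, hence $P_1 = 0$, and $g=-1$ forces $xP_{-1} = x P_{-1}$ (since $(-1)(x) = -x$ cancels the supersign), leaving $P_{-1}$ an even-degree... but $z$ odd means $P_{-1}$ has odd polynomial degree, and then centrality in $W_N$ again forces $P_{-1}=0$; so $\se_1 = 0$ and $\se = \se_0 = \ce = \mathbb C$.

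The main obstacle is making the ``lift from $\mathrm{gr}$ back to $\HH$'' rigorous: knowing $\bar z$ is a scalar only tells us $z = c\cdot 1 + (\text{lower filtration})$, and one must run an induction on filtration degree, subtracting $c$ and repeating, to conclude $z\in\mathbb C$ exactly. This requires checking that the lower-order terms produced by the deformed commutators (the $\sum_R \eta_R\omega_R(x,y)R$ pieces, which land in $\mathbb C[G]$ of degree $0$) do not obstruct the descent — essentially that $\mathbb C[G]$ contributes nothing new to the center beyond scalars, which follows from the $g=\pm1$ analysis already carried out at the bottom degree. Care is also needed in the odd supercenter case to track polynomial parity against the element $-1$ correctly.
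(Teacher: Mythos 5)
Your argument is essentially correct, but it takes a genuinely different route from the paper. The paper does not reprove triviality of the center at all: it simply quotes \cite{BG} for $\ce=\mathbb C$, observes $\se_0\subset\ce$, and kills the odd supercenter by a short trick — an odd supercentral $z=\sum_g P_g g$ must (by the same leading-order degree count you use) be of the form $P_K K$ with $K=-1\in G$, and then $zK$ is an honest central element, hence lies in $\mathbb C$ by \cite{BG}; being odd, it vanishes. You instead prove everything from scratch via the filtration by polynomial degree: pass to $\mathrm{gr}\,\HH\cong{\mathbb C}[V]\rtimes G$ (note this PBW isomorphism is itself a theorem, due to \cite{sra}, and should be cited), show the symbol of a (super)central element is supported on $g=\pm1$ with the indicated sign rule, and then use the Weyl part of (\ref{rel}) at the next order to force the top polynomial coefficient to be constant. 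Two remarks on your write-up. First, the ``subtract $c$ and iterate'' descent is not the right shape: the top symbol is homogeneous of degree $d$, so once the next-order equation forces it to be constant you get directly $d=0$ (and then the degree-zero case $z\in{\mathbb C}[G]$ is immediate), no iteration needed. Second, the obstruction you flag — that the $\sum_R\eta_R\omega_R(x,\cdot)R$ terms might feed back into the coefficient of $1$ (resp. of $-1$) at degree $d-1$ — does indeed vanish: such a term carries group element $Rg$ with $g$ a top-degree component, and the graded analysis has already confined top-degree components to $g=\pm1$, while $Rg=1$ (resp. $Rg=-1$) would require $g$ to be $R^{-1}$ or $-R^{-1}$, which is excluded (for the center the $g=-1$ coefficient is already zero, and $R=1$ is never a reflection). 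With these points made explicit your argument closes and is self-contained, at the price of redoing the Brown--Gordon computation; the paper's proof is much shorter but rests on the external reference and on the Klein-operator observation $zK\in\ce$.
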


\begin{proof}
The first part of this Theorem, $\ce=\mathbb C$, is proven in \cite{BG}.
Further, $\se_0=\mathbb C$, and it remains to prove that
$\se_1=0$.

Suppose that there exists $z\in\se_1$. Then $z=\sum_{g\in G}P_g g$.
Consider $[z, b_g]_{-1}=zb_g+b_gz$ for all $b_g\in {\mathfrak B}_g$ for all $g\in G$.
One can see that $deg [z, b_g]_{-1}>deg z$ unless there exists the element
$K=-1$ in $G\subset Sp(2N)$ and
 $z=P_K K$.

If such element $K$ does not exist%
\footnote{
Clearly, $Kf=(-1)^{\pi(f)}fK$ for all $f\in\HH$, $\pi(K)=0$
and $K^2=1$. We call such element of $\HH$ {\it Klein operator}.
If Klein operator $K$ exists, then it defines the isomorphism of the
spaces of the traces and the supertraces on $\HH$ (see \cite{stek}).}
 then $z=0$
otherwise $zK\in \ce$ which also implies $z=0$ due to  $\pi(zK)=1$.
\end{proof}

Since $1\in G$, it follows from Theorem \ref{main0}
that there exists a supertrace $str_1$ such that $str_1(1)\ne 0$.
So, $[\HH^S,\,\HH^S]\cap \se=0$.

\subsection{Lie algebras and Lie superalgebras generated by $\HH$.}

\begin{definition}Set
\bee
\LLL &:=& [\HH^L,\,\HH^L]_{+1}\,/\left([\HH^L,\,\HH^L]_{+1}\cap \ce\right) \,;\\
\SSS &:=& [\HH^S,\,\HH^S]_{-1}.
\eee
\end{definition}

Now one can apply Theorem 3.8 of \cite{SM} (which generalizes the results of I.N.Herstein (see
\cite{H1}, \cite{H2})
to formulate the following statement

\begin{theorem}
{\it
If $\HH$ is a simple associative algebra, then

1) $\LLL$ is a simple Lie algebra,%

2) $\SSS$ is a simple Lie superalgebra.
}
\end{theorem}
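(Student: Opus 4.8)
The plan is to derive both assertions directly from Theorem 3.8 of \cite{SM}, feeding it the structural facts about $\HH$ that have already been established. First I would recall what that theorem needs: $A$ a simple associative (super)algebra over a field of characteristic $\ne 2,3$, together with control of its (super)center, and an exclusion of a short list of exceptional cases which occur only when $A$ is finite-dimensional of small dimension over its center. The engine behind \cite{SM} — and behind the classical Herstein results \cite{H1}, \cite{H2} it extends — is that any Lie (resp.\ Lie super-) ideal $U$ of a simple associative (super)algebra $A$ satisfies $[U,A]\subseteq U$, whence a commutator/dimension argument forces $U$ either into the (super)center or to contain the commutator ideal $[A,A]$. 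Applying this to a proper ideal of $[\HH^L,\HH^L]_{+1}$ (resp.\ $[\HH^S,\HH^S]_{-1}$) and then quotienting by the relevant piece of the (super)center is what produces simplicity.

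Next I would verify the hypotheses for $A=\HH$. Simplicity is the standing assumption of the theorem. The center and supercenter are both $\mathbb C$ by Theorem \ref{center}; in particular $\ce=\se=\mathbb C$. The ground field is $\mathbb C$, so the characteristic restriction is vacuous. Finally, $\HH$ is infinite-dimensional — it contains a Weyl-type subalgebra ${\mathbb C}[V]$ modulo the relations (\ref{rel}) — so it is certainly not finite-dimensional over its center, and none of the low-dimensional exceptional cases of \cite{SM} can arise.

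With the hypotheses in place, part 1) is immediate: Theorem 3.8 of \cite{SM} gives that $[\HH^L,\HH^L]_{+1}/\left([\HH^L,\HH^L]_{+1}\cap\ce\right)=\LLL$ is a simple Lie algebra. For part 2), the super-version of the same theorem gives simplicity of $[\HH^S,\HH^S]_{-1}$ modulo its intersection with $\se$; but it was observed just above that $[\HH^S,\HH^S]\cap\se=0$ (using $\se=\mathbb C$ together with the existence, by Theorem \ref{main0}, of a supertrace $str_1$ with $str_1(1)\ne 0$), so that quotient equals $\SSS$ itself, which is therefore a simple Lie superalgebra.

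The only genuine work is bookkeeping rather than new mathematics: one must match the parity convention $\pi$ used here with the grading hypothesis in \cite{SM}, and one must be sure that no degenerate small-dimensional case of that theorem has been overlooked. I expect this last check to be the main (and only) obstacle; it is resolved by the infinite-dimensionality of $\HH$ noted above, after which both statements follow at once.
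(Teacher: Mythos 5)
Your proposal is correct and follows essentially the same route as the paper: the paper simply invokes Theorem 3.8 of \cite{SM} after establishing $\ce=\se=\mathbb C$ (Theorem \ref{center}) and $[\HH^S,\HH^S]\cap\se=0$ via the supertrace $str_1$ with $str_1(1)\ne 0$. Your additional checks (characteristic, infinite-dimensionality excluding the small exceptional cases) are sensible bookkeeping that the paper leaves implicit.
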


\subsection{Bilinear forms on $\LLL$ and $\SSS$.}

If there exists a trace $tr_1$ on $\HH$ such that $tr_1(1)\ne 0$, then
$[\HH^L,\,\HH^L]_{+1}\cap \ce =0$.
If $tr(1)=0$ for any trace $tr$, then
$tr((f+\alpha)(g+\beta))=tr(fg)$ for any $f,g\in[\HH^L,\,\HH^L]_{+1}$ and
any $\alpha,\beta\in\mathbb C$.

So,
it is possible to define
a bilinear symmetric invariant form $B_{tr}$ on $\LLL$.

\begin{definition} Let $tr$ be a trace on $\HH$.

\noindent Let $\rho:\,[\HH^L,\,\HH^L]_{+1} \mapsto [\HH^L,\,\HH^L]_{+1}\,/\,[\HH^L,\,\HH^L]_{+1}\cap \ce$
be the natural projection.
Then
\be\label{bt}
B_{tr}(\rho(f),\,\rho(g)):= tr(fg) \ \texttt{for any } f,g\in [\HH^L,\,\HH^L]_{+1}
\ee
is a well defined bilinear form on $\LLL$.
\end{definition}

Define also a bilinear symmetric invariant form $B_{str}$ on $\SSS$.

\begin{definition} Let $str$ be a supertrace on $\HH$. Set
\be\label{bs}
B_{str}(f,\,g):= str(fg) \ \texttt{for any } f,g\in \SSS.
\ee
\end{definition}

To finish this section,
let us show that if $\HH$ is a simple associative algebra,
then maps  Eqs. (\ref{bt}) and (\ref{bs}) sending the (super)traces
into the spaces of bilinear invariant (super)symmetric forms are injections.

Suppose that $B_{str}\equiv 0$ for some supertrace $str$,
i.e., $str([a,b]_{-1}[c,d]_{-1})=0$ for any $a,b,c,d\in \HH$.
Hence, $str([[a,b]_{-1},c]_{-1}d)=0$ for any $a,b,c,d\in \HH$.
Since $\HH$ is simple, we have $[[a,b]_{-1},c]_{-1}=0$ for any $a,b,c\in \SSS$,
which contradicts to the simplicity of $\SSS$.

The proof for the traces is analogous.



\appendix

\section{Proof of consistency conditions.}

\subsection{Proof of consistency condition (\ref{c1}) for $\lambda\neq \varkappa$.}\label{appb}

Let parameters $\mu_1  \defeq   \mu^{K_1}$ and $\mu_2  \defeq
\mu^{K_2}$ be such that $\lambda_1 \neq \varkappa $ and $\lambda_2
\neq \varkappa $, where $\lambda_1  \defeq   \lambda_{K_1}$ and
$\lambda_2  \defeq \lambda_{K_2}$. Let $b^1  \defeq b_{K_1}$ and
$b^2  \defeq b_{K_2}$. Let us prove by induction that  conditions
(\ref{c1}) hold. To implement induction, we select the part of degree
$k$ in $\mu$ from eq. (\ref{nm1})  and observe that this part
contains a degree $k+1$ polynomial in $b_M$ in the left-hand side
of eq. (\ref{nm1}) while the part on the right hand side of the
differential version (\ref{nm1}) of eq. (\ref{start}), which is of the same degree
in $\mu$, has a degree $k-1$ as polynomial in $b_M$.

This happens because of
the presence of the commutator $[b_L ,S]$ which is a zero degree polynomial
due to the basic relations (\ref{rel}). As a result, the inductive hypothesis allows
us to use the properties of the $\varkappa$-trace provided that the
commutator $[b_L ,S]$ is always handled as the right hand side of eq. (\ref{rel}),
i.e., we are not
allowed to represent it again as a difference of the second-degree
polynomials.

Direct differentiation of Eq. (\ref{nm1}) with the help of eq. (\ref{d}) gives
\bee
\label{p1}
(1-\varkappa \lambda_2 )\frac {\p}{\p\mu_2 }
\int
\,(-\varkappa \lambda_1 t_1 -t_2 ) \str \ig (
e^{t_1 S} [b^1,S]\,e^{t_2 S}\sigmama \ig ) D^1 t
- \ig (1 \leftrightarrow 2 \ig )
&=&\nn
=\left(\int \,
(1-\varkappa \lambda_2 )(-\varkappa \lambda_1 t_1 -t_2 )
\str \left( e^{t_1 S} [b^1 ,b^2 ]\,e^{t_2 S}
\sigmama  \right )D^1 t      \,
- \ig (1 \leftrightarrow 2 \ig )  \right)
&+&\nn
+\igg (
       \int (1-\varkappa \lambda_2 ) (-\varkappa \lambda_1 (t_1 +t_2 )-t_3 )
   \str \ig (
        e^{t_1 S} b^2 e^{t_2 S} [b^1 ,S]\,e^{t_3 S}
       \ig ) D^2 t\,\,
              - \ig (1 \leftrightarrow 2 \ig )
\igg )
&+&
\nn
+ \igg ( \int (1-\varkappa \lambda_2 )
          (-\varkappa \lambda_1 t_1 -t_2 -t_3 )
  \str  \ig(
          e^{t_1 S} [b^1 ,S]\,e^{t_2 S} b^2
          e^{t_3 S}\sigmama
      \ig )D^2 t \,
         - \ig (1 \leftrightarrow 2 \ig )
\igg)\,.&{}&
\eee

We have to show that the right hand side of eq. (\ref{p1}) vanishes.  Let us
first transform the second and the third terms on the right-hand side of eq.
(\ref{p1}). The idea is to move the operators $b^2$ through the exponentials
towards the commutator $[b^1 ,S]$ in order to use then the Jacobi identity for the
double commutators.  This can be done in two different ways inside  the
$\varkappa$-trace so that one has to fix appropriate weight factors for each of
these processes.
Let the notation ${\overrightarrow A}$ and ${\overleftarrow
A}$ mean that the operator $A$ has to be moved from its position to the
right and to the left, respectively.

The correct weights turn out to be
\bee\label{p2}
D^2 t(-\varkappa \lambda_1 (t_1 +t_2 )-t_3 )b^2 \equiv
D^2 t(-\varkappa \lambda_1 -t_3 (1-\varkappa  \lambda_1 ))b^2=
\nn
= D^2 t\left (\igg (\frac { \lambda_1 \lambda_2}{1-\varkappa \lambda_2}-t_3 (1-\varkappa
 \lambda_1 )\igg )
\overrightarrow {b^2} +
\frac {-\varkappa \lambda_1 }{1-\varkappa \lambda_2} \overleftarrow {b^2} \right )
\n
\eee
and
\bee\label{p3}
D^2 t(-\varkappa \lambda_1 t_1 -t_2 -t_3 )b^2
\equiv D^2 t ((-\varkappa \lambda_1 +1) t_1 -1)b^2=
\nn
= D^2 t\left (
\igg (t_1 (1-\varkappa  \lambda_1 ) -\frac {1}{1-\varkappa \lambda_2} \igg )
\overleftarrow {b^2} -
\frac {-\varkappa \lambda_2 }{1-\varkappa \lambda_2} \overrightarrow {b^2} \right )
\n
\eee
for the second and third terms in the right hand side of eq. (\ref{p1}),
respectively.   Using eq. (\ref{r3})
along with the simple formula
\be\label{p4}
\int \, \phi (t_3 ,\ldots t_{n+1} )D^n t=
\int \, t_1\phi (t_2 ,\ldots t_n )D^{n-1} t
\n
\ee
we find that all terms which involve both $[b^1 ,S]$ and
$[b^2,S]$ pairwise
cancel after antisymmetrization $1\leftrightarrow 2$.

As a result, one is left with some terms involving double commutators which,
thanks to the Jacobi identities and antisymmetrization, are all reduced to
\be\label{1p5}
\int\,\ig (\lambda_1\lambda_2 t_1+t_2-t_1 t_2 (1-\varkappa \lambda_1)(1-\varkappa \lambda_2)\ig )
\str \ig (\exp (t_1 S)
[S,[b^1 ,b^2 ]]\exp (t_2 S) \sigmama \ig ) D^1 t\,.
\n
\ee
Finally, we observe that this expression can be equivalently rewritten in
the form
\be\label{p5}
\int \,
\ig (\lambda_1\lambda_2 t_1+t_2-t_1 t_2 (1-\varkappa \lambda_1)(1-\varkappa \lambda_2)\ig )
\left(\frac {\p}{\p t_1}-\frac {\p}{\p t_2} \right) \str \ig (\exp (t_1 S)
[b^1 ,b^2 ]\exp (t_2 S) \sigmama \ig )
D^1 t
\ee
and after integration by parts cancel the first term on the right-hand side
of eq. (\ref{p1}). Thus, we showed that eqs. (\ref{nm1}) are
compatible for the case $\lambda_{1,2}\neq \varkappa  $.

Analogously, we can show that eqs. (\ref{nm1}) are compatible with eq.
(\ref{m1}).  Indeed, let $\lambda_1 =\varkappa $, $\lambda_2 \neq \varkappa $. Let us prove
that
\be\label{p6}
\frac {\p}{\p\mu_2}\str \ig ([b^1 , \exp (S)]\sigmama  \ig )=0
\ee
provided the $\varkappa$-trace is well-defined for the lower degree polynomials.
The
explicit differentiation gives
\bee\label{p7}
\!\!\!\!\!\!\!\!\!\!\!\!\!\!\!\!\!\!
\frac {\p}{\p\mu_2}\str \ig ([b^1 , \exp (S)]\sigmama  \ig )&=&
\int \,\str \ig ([b^1 , \exp (t_1 S)b^2  \exp (t_2 S)]\sigmama  \ig
)D^1 t
=
\nn &=&
(1-\varkappa \lambda_2 )^{-1} \str \ig ([b^1 ,(b^2
 \exp (S) -\varkappa  \lambda_2  \exp ( S)b^2 )]\sigmama  \ig ) +\ldots
\eee
where dots denote some terms of the form $\str \ig( [b^1 , B]\sigmama \ig )$
involving more commutators inside $B$, which therefore amount to some
lower degree polynomials and vanish by the inductive hypothesis. As a result,
we find that
\bee\label{p8}
\!\!\!\!\!\!\!\!\!\!\!\!\!\!\!\!\!\!
\frac {\p}{\p\mu_2}\str \ig ([b^1 , \exp (S)]\sigmama  \ig )&=&
(1-\varkappa \lambda_2 )^{-1} \str \ig ((b^2 [b^1 , \exp (S)] -\varkappa  \lambda_2
[b^1 , \exp ( S)]b^2 )\sigmama  \ig )
+
\nn
&+& (1-\varkappa \lambda_2 )^{-1} \str \ig (([b^1 ,b^2 ] \exp (S) -\varkappa
\lambda_2  \exp ( S)[b^1 ,b^2 ])\sigmama  \ig )\,.
\eee
This expression vanishes by the inductive hypothesis, too.

\subsection{The proof of consistency conditions (\ref{c3}) (the case of special polynomials)}\label{appc}

In order to prove eq. (\ref{c3}) we use the inductive hypothesis {\bf (i)}.
In this Appendix we use  the convention that any
expression with the coinciding
upper or lower indices
are automatically symmetrized, e.g.,
$U^{II} \defeq  \frac  1 2 (U^{I_1 I_2}+U^{I_2 I_1})$.
In this Appendix, all the eigenvectors $b_I$  of $g$ belong to ${\cal E}(g)$.
The identity
\be\label{p12}
0 = \sum_{M}\str \ig (\ig [  \exp (S^\prime )
\{ b_I ,f_{IM}\}\mu^M ,b_J b_J\ig ]
\sigmama \ig ) -(I \leftrightarrow J)
\ee
holds
due to Lemma \ref{lemma4}
for all terms of degree $k-1$ in $\mu$
with $E(g) \leqslant l+1$
and
for all lower degree polynomials in $\mu$,
because one
can always move $f_{IJ}$ to $\sigmama $ in eq. (\ref{p12}) combining $f_{IJ}\sigmama$
into a combination of
elements of $ {G} $ analyzed in Lemma \ref{lemma4}.

Straightforward calculation of the commutator in the right-hand-side of eq.
(\ref{p12}) gives $0 = X_1+X_2+X_3$, where
\bee\label{1p13}
X_1&=&-\sum_{M,L}\int\,\str
          \left (   \exp (t_1 S^\prime ) \{b_J , F_{JL} \}
              \mu^L  \exp (t_2 S^\prime ) \{b_I ,f_{IM}\}\mu^M \sigmama
          \right) D^1 t
       -(I \leftrightarrow J)\,,\nn
X_2&=&\sum_M \str
          \left (  \exp (S^\prime )
              \ig \{\{b_J, F_{IJ}\},f_{IM}
              \ig\}  \mu^M \sigmama
          \right)-(I \leftrightarrow J)\,,\nn
X_3&=& \sum_M \str
              \left (  \exp (S^\prime )
                     \ig
\{b_I,\{b_J,[f_{IM},b_J]\}
                     \ig \}\mu^M \sigmama
               \right)-(I \leftrightarrow J)\,.
\eee
The terms of $X_1$ bilinear in $f$ cancel due to the
antisymmetrization ($I\leftrightarrow J$) and the inductive hypothesis
{\bf (i)}. As a result, one
can transform $X_1$ to the form
\bee\label{p13}
X_1
=
\left (- \frac  1 2
\left [ L_{JJ},\,R_{II}\right ] +2
\str \ig ( e^{S^\prime }
\{b_I ,f_{IJ}\}\mu_J \sigmama \ig )
\right)
-
(I \leftrightarrow J).
\eee

Substituting
$F_{IJ}={\cal C}_{IJ}+ f_{IJ}$ and $f_{IM}= [b_I,b_M] -{\cal C}_{IM}$
one transforms $X_2$ to the form
\bee\label{x2}
X_2
&=& 2{\cal C}_{IJ}R_{IJ}
-2 \left( \str
         \ig ( e^{S^\prime } \{b_J ,f_{IJ}\}\mu_I \sigmama
         \ig ) -(I \leftrightarrow J)
   \right)+Y,
\eee
where
\bee\label{Y}
Y=  \str \left ( e^{S^\prime }
              \ig \{ \{b_J, f_{IJ}\},[b_I,\,S^\prime]
              \ig\}   \sigmama
          \right)
    -(I \leftrightarrow J)\,.
\eee
Using that
\bee\label{S}
 \str
    \left (  \exp (S^\prime )
       \left[   P f_{IJ}Q,\,S^\prime
       \right]   \sigmama
    \right)
=0
\eee
provided the inductive hypothesis can be used,
one transforms $Y$ to the form
\bee\label{x2x2}
Y
\!\!
=
\!
\str
 \left ( e^{S^\prime }
   \left(-[f_{IJ}, (b_I S^\prime b_J + b_J S^\prime b_I )]
              - b_I [f_{IJ}, S^\prime] b_J -b_J [f_{IJ}, S^\prime] b_I
              + [f_{IJ}, \{b_I, b_J\}] S^\prime
    \right) \sigmama
 \right).\nn
\eee

Let us rewrite $X_3$ in the form $X_3=X_3^{s}+X_3^{a}$, where
\bee
X_3^s=
\frac  1 2 \sum_M \str
              \left ( e^{S^\prime }
               \ig (
                    \ig\{b_I,\{b_J,[f_{IM},b_J]\}\ig \} +
                    \ig\{b_J,\{b_I,[f_{IM},b_J]\}\ig \}
                \ig )
                          \mu^M \sigmama
               \right)
-
(I \leftrightarrow J)\,,\nn
X_3^a=
\frac  1 2 \sum_M \str
              \left ( e^{S^\prime }
               \ig (
                    \ig\{b_I,\{b_J,[f_{IM},b_J]\}\ig \} -
                    \ig\{b_J,\{b_I,[f_{IM},b_J]\}\ig \}
                \ig )
                          \mu^M \sigmama
               \right)
-
(I \leftrightarrow J)\,. \nonumber
\eee
With the help of the Jacobi identity
$[f_{IM},b_J]-[f_{JM},b_I]=[f_{IJ},b_M]$
one expresses $X_3^s$ in the form
$$
X_3^s=
\frac  1 2 \str
\left ( e^{S^\prime}
 \left (
    \{b_I,b_J\}[f_{IJ}, S^\prime] + [f_{IJ}, S^\prime]\{b_I,b_J\}
     +2 b_I[f_{IJ}, S^\prime]b_J + 2 b_J[f_{IJ}, S^\prime]b_I
  \right ) \sigmama
\right ).
$$
Let us transform this expression for $X_3^a$ to the form
\bee\label{100}
X_3^a=
\frac  1 2 \sum_M
\str
\left ( e^{S^\prime}
      \left[  F_{IJ},\, [f_{IM},b_J]\right ] \mu^M \sigmama
\right )       -(I \leftrightarrow J).
\eee

Substitute
$F_{IJ} ={\cal C}_{IJ}+ f_{IJ}$ and $f_{IM}= [b_I,b_M]-{\cal C}_{IM}$
in eq. (\ref{100}).
After simple transformations we find that $Y+X_3=0$.
From eqs. (\ref{p13}) and (\ref{x2}) it follows that
the right hand side of eq. (\ref{p12}) is
equal to
$$
\frac  1 2 ([L_{II},\, R_{JJ}]-[L_{JJ},\, R_{II}]) + 2{\cal C}_{IJ} R_{IJ}.
$$
This completes the proof of the consistency conditions
(\ref{c3}).


\end{document}